\newtheorem{theorem}{Theorem}
\newtheorem{proposition}{Proposition}
\newtheorem{lemma}{Lemma}
\newtheorem{corollary}{Corollary}
\theoremstyle{remark}
\newcommand{\rmap}{\longrightarrow}
\newcommand{\diffto}{\xrightarrow{\raisebox{-0.2 em}[0pt][0pt]{\smash{\ensuremath{\sim}}}}}
\begin{document}
\title{Reeb-Thurston stability for symplectic foliations}
\author{Marius Crainic}
\address{Depart. of Math., Utrecht University, 3508 TA Utrecht, The Netherlands}
\email{M.Crainic@uu.nl}
\author{Ioan M\u{a}rcu\cb{t}}
\address{Depart. of Math., Utrecht University, 3508 TA Utrecht, The Netherlands}
\email{I.T.Marcut@uu.nl}
\begin{abstract}
We prove a version the local Reeb-Thurston stability theorem for symplectic foliations. 
\end{abstract}
\maketitle

\section*{Introduction}

A \textbf{symplectic foliation} on a manifold $M$ is a (regular) foliation $\mathcal{F}$, endowed with a 2-form
$\omega$ on $T\mathcal{F}$ whose restriction to each leaf $S$ of $\mathcal{F}$ is a symplectic form
\[\omega_{S}\in\Omega^2(S).\]
Equivalently, a symplectic foliation is a Poisson structure of constant rank.

In this paper we prove a normal form result for symplectic foliations around a leaves. The result uses the
\textbf{cohomological variation} of $\omega$ at the leaf $S$, which is a linear map (see section \ref{Section} for the
definition)
\begin{equation}\label{EQ_coh_var}
[\delta_S\omega]_x:\nu_x^*\rmap H^2(\widetilde{S}_{hol}),\ \ x\in S,
\end{equation}
where $\nu$ denotes the normal bundle of $T\mathcal{F}$, and $\widetilde{S}_{hol}$ is the holonomy cover of $S$.
The cohomological variation arises in fact from a linear map:
\begin{equation}\label{EQ_vari}
\delta_S\omega_x:\nu_x^*\rmap \Omega^2_{\textrm{closed}}(\widetilde{S}_{hol}).
\end{equation}
The local model for the foliation around $S$, which appears in the classical results of Reeb and Thurston, is the flat
bundle $(\widetilde{S}_{hol}\times\nu_x)/\pi_1(S,x)$, where $\pi_1(S,x)$ acts on the second factor via the linear
holonomy
\begin{equation}\label{EQ_lin_hol}
dh:\pi_1(S,x)\rmap Gl(\nu_x).
\end{equation}
For a symplectic foliations the flat bundle can be endowed with leafwise closed 2-forms, which are symplectic in a
neighborhood of $S$; namely, the leaf through $v\in\nu_x$ carries the closed 2-form $j^1_S(\omega)_v$ whose
pull-back to $\widetilde{S}_{hol}\times\{v\}$ is
\begin{equation*}
p^*(j^1_{S}(\omega)_v)=p^*(\omega_S)+\delta_S\omega_x(v).
\end{equation*}

Our main result is the following:
\begin{theorem}\label{Theorem} Let $S$ be an embedded leaf of the symplectic foliation $(M,\mathcal{F},\omega)$. If
the holonomy group of $S$ is finite and the cohomological variation (\ref{EQ_coh_var}) at $S$ is a surjective map, then
some open around $S$ is isomorphic as a symplectic foliation to an open around $S$ in the flat bundle
$(\widetilde{S}_{hol}\times\nu_x)/\pi_1(S,x)$ endowed with the family of closed 2-forms $j^1_S(\omega)$ by a
diffeomorphism which fixes $S$.
\end{theorem}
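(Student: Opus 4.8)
The plan is to separate the problem into its foliated (smooth) part and its symplectic part, reducing the latter to a leafwise Moser argument. First I would invoke the classical Reeb--Thurston stability theorem for the underlying foliation $\mathcal{F}$: since $S$ is embedded with finite holonomy group, a saturated neighborhood of $S$ is carried, by a foliated diffeomorphism fixing $S$, onto a neighborhood of the zero section in the flat bundle $(\widetilde{S}_{hol}\times\nu_x)/\pi_1(S,x)$ with its model foliation $\mathcal{F}_0$. Transporting $\omega$ through this diffeomorphism, I may assume from now on that $M$ is this flat bundle, that $\mathcal{F}=\mathcal{F}_0$, and that I have two leafwise symplectic forms near $S$: the transported form $\omega_1$ and the model family $\omega_0:=j^1_S(\omega)$. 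Both restrict to $\omega_S$ along $S$, so their difference $\tau:=\omega_1-\omega_0$ is a leafwise closed $2$-form vanishing along $S$, and near $S$ every $\omega_t:=\omega_0+t\tau$ is still leafwise nondegenerate.

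The goal is then to produce a leaf-preserving isotopy $\psi_t$, fixing $S$, with $\psi_t^*\omega_t=\omega_0$; differentiating, this amounts to solving $\iota_{X_t}\omega_t=-\alpha$ for a leafwise vector field $X_t$, where $\alpha$ is a leafwise primitive $\tau=d_{\mathcal{F}}\alpha$. So everything rests on writing $\tau$ as $d_{\mathcal{F}}\alpha$ with $\alpha$ a smooth leafwise $1$-form vanishing along $S$ (the vanishing is what forces $X_t|_S=0$, hence that the isotopy fixes $S$). Unlike the ordinary relative Moser setting, the leaves are finite covers of $S$ and are not contractible, so leafwise exactness of $\tau$ is a genuine cohomological condition on each nearby leaf, not a consequence of its vanishing on $S$ alone.

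This is where the hypotheses enter. Write $S_v$ for the leaf through $v\in\nu_x$. By construction of the model, the leafwise cohomology classes of $\omega_0$ and $\omega_1$ on $S_v$ agree to first order in $v$ --- both equal $[\omega_S]$ plus $[\delta_S\omega]_x$ evaluated at $v$ --- so the discrepancy $c(v):=[\tau|_{S_v}]\in H^2(\widetilde{S}_{hol})$ vanishes to second order at $v=0$. Using that the cohomological variation $[\delta_S\omega]_x$ is surjective, I choose a linear right inverse $\sigma$ and set $\Phi(v):=v+\sigma(c(v))$; since $c$ is of order $|v|^2$ this defines a diffeomorphism near the zero section covering the identity on $S$ and tangent to the identity along $S$, and averaging $\Phi$ over the finite holonomy group makes it $\pi_1(S,x)$-equivariant, hence a diffeomorphism of the flat bundle preserving $\mathcal{F}_0$. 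By design $[(\Phi^*\omega_0)|_{S_v}]=[\omega_1|_{S_v}]$ for all nearby $v$, so after replacing $\omega_0$ by $\Phi^*\omega_0$ I may assume $\tau$ is leafwise exact on every leaf near $S$. Both the surjectivity hypothesis (to solve for $\Phi$) and the finite holonomy (to average) are essential here.

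It remains to upgrade leafwise exactness, leaf by leaf, to a single smooth, holonomy-equivariant primitive $\alpha$ vanishing along $S$; I expect this parametrized leafwise Poincar\'e lemma to be the main obstacle. The mechanism is Hodge theory on the compact finite cover $\widetilde{S}_{hol}$: the leafwise Laplacian has a Green's operator depending smoothly on the transverse parameter, producing a primitive that varies smoothly in $v$; averaging the construction over the finite holonomy group makes it $\pi_1(S,x)$-equivariant, hence descends to the flat bundle, and the second-order vanishing of $\tau$ along $S$ lets me arrange $\alpha|_S=0$. With such an $\alpha$, the Moser equation $\iota_{X_t}\omega_t=-\alpha$ is solved by leafwise nondegeneracy of $\omega_t$, the field $X_t$ integrates to an isotopy fixing $S$ because $\alpha|_S=0$, and $\psi_1^*\omega_1=\Phi^*\omega_0$. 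Composing the Reeb--Thurston diffeomorphism, the correction $\Phi$, and $\psi_1$ produces the required symplectic-foliation isomorphism fixing $S$.
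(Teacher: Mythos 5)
Your overall strategy coincides with the paper's: first a foliated normal form, then a transverse reparametrization built from a right inverse of the cohomological variation to match leafwise cohomology classes, then a relative leafwise Moser argument. Your map $\Phi(v)=v+\sigma(c(v))$ is a perfectly good substitute for the paper's equivariant linearization of the submersion $v\mapsto[\omega_{1,v}]-[p^*\omega_S]$ (and averaging $\Phi$ is harmless, since it amounts to averaging $\sigma$, which remains a right inverse). However, there is a genuine gap: you implicitly assume $S$ is compact, while the theorem does not, and removing compactness is precisely the point of this result (the paper contrasts it with the earlier normal form theorem, whose hypotheses include compactness of $S$). Concretely, compactness enters your argument in three places. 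First, the classical Reeb stability theorem you invoke requires $S$ compact to produce a \emph{saturated} neighborhood; for an embedded non-compact leaf the paper must prove a separate statement (Lemma \ref{Lemma1}), which yields a foliated chart that is \emph{not} saturated. Second, without saturation the nearby ``leaves'' of the chart are only open pieces of $S_v$, so your class $c(v)=[\tau|_{S_v}]\in H^2(\widetilde{S}_{hol})$ is not even defined until $\omega$ has been extended to a globally defined closed foliated $2$-form on the whole flat bundle; this is the content of Corollary \ref{Corollary4}, which you skip. Third, your ``parametrized leafwise Poincar\'e lemma'' rests on Hodge theory and a Green's operator on ``the compact finite cover $\widetilde{S}_{hol}$''; for non-compact $S$ there is no such operator, and the paper replaces this by a sheaf-theoretic computation of the foliated cohomology of a product (Lemma \ref{Lemma3} and Corollary \ref{Corollary3}), valid for arbitrary $M$. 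The same lemma is also what guarantees smoothness of $v\mapsto c(v)$, which you use without justification.

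If one adds the hypothesis that $S$ is compact, your proposal is essentially correct and essentially the paper's proof, with the equivariant submersion lemma replaced by your explicit correction $\Phi$ and the sheaf-theoretic primitives replaced by Hodge-theoretic ones. As written, though, the three points above are exactly where the argument breaks for embedded non-compact leaves, and each requires one of the auxiliary results the paper develops.
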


This result is not a first order normal form theorem, since the holonomy group and the holonomy cover depend on the
germ of the foliation around the leaf. The first order jet of the foliation at $S$ sees only the linear holonomy group
$H_{lin}$ (i.e.\ the image of $dh$) and the corresponding linear holonomy cover denoted $\widetilde{S}_{lin}$. Now,
the map (\ref{EQ_vari}) is in fact the pull-back of a map with values in
$\Omega^2_{\textrm{closed}}(\widetilde{S}_{lin})$. Using this remark, and an extension to noncompact leaves of a
result of Thurston (Lemma \ref{Lemma2}), we obtain the following consequence of Theorem \ref{Theorem}.

\begin{corollary}\label{Corollary1}
Under the assumptions that $S$ is embedded, $\pi_1(S,x)$ is finitely generated, $H_{lin}$ is finite,
$H^1(\widetilde{S}_{lin})=0$ and the cohomological variation
\[[\delta_S\omega]_x:\nu_x^*\rmap H^2(\widetilde{S}_{lin})\]
is surjective, the conclusion of Theorem \ref{Theorem} holds.
\end{corollary}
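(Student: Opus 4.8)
The plan is to deduce Corollary \ref{Corollary1} from Theorem \ref{Theorem} by showing that, under the stated hypotheses, the assumptions of the theorem are satisfied, with the linear holonomy data $(H_{lin}, \widetilde{S}_{lin})$ playing the role of the honest holonomy data $(H, \widetilde{S}_{hol})$. The point is that a priori the theorem requires control of the full holonomy group, whereas the corollary only assumes conditions on the linear holonomy; so the first task is to bridge this gap. Concretely, I would first observe that the cohomological variation $[\delta_S\omega]_x$ always factors through $H^2(\widetilde{S}_{lin})$, since by the remark preceding the corollary the map $\delta_S\omega_x$ of (\ref{EQ_vari}) is pulled back from a map valued in $\Omega^2_{\mathrm{closed}}(\widetilde{S}_{lin})$; thus surjectivity of the variation into $H^2(\widetilde{S}_{lin})$ is a meaningful hypothesis that feeds into the construction.

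\smallskip

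The key step, and the main obstacle, is to pass from finiteness of the \emph{linear} holonomy group $H_{lin}$ to finiteness of the \emph{full} holonomy group $H$, so that Theorem \ref{Theorem} becomes applicable. This is precisely where the cited extension of Thurston's result, Lemma \ref{Lemma2}, enters. I would invoke it as follows: since $\pi_1(S,x)$ is finitely generated and the linear holonomy representation $dh$ has finite image $H_{lin}$, the kernel of $dh$ is a finitely generated subgroup, and the hypothesis $H^1(\widetilde{S}_{lin})=0$ controls the relevant first cohomology of the corresponding cover. Thurston's stability argument (in the noncompact form of Lemma \ref{Lemma2}) then forces the holonomy homomorphism itself to have finite image, i.e.\ $H$ is finite and in fact the inclusion $H \hookrightarrow \mathrm{Gl}(\nu_x)$ is conjugate into the finite linear holonomy, giving $\widetilde{S}_{hol}=\widetilde{S}_{lin}$ up to the relevant identifications. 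This is the delicate part because one must rule out nontrivial holonomy that is trivial to first order, and it is exactly the cohomological condition $H^1(\widetilde{S}_{lin})=0$ that obstructs such higher-order holonomy.

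\smallskip

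Once $H$ is known to be finite and the holonomy cover agrees with the linear holonomy cover, the remaining hypothesis of Theorem \ref{Theorem}, namely surjectivity of $[\delta_S\omega]_x:\nu_x^*\to H^2(\widetilde{S}_{hol})$, follows immediately from the assumed surjectivity of $[\delta_S\omega]_x:\nu_x^*\to H^2(\widetilde{S}_{lin})$ together with the identification $\widetilde{S}_{hol}=\widetilde{S}_{lin}$. Finally I would simply apply Theorem \ref{Theorem}: it produces a diffeomorphism fixing $S$ that identifies a neighborhood of $S$ with a neighborhood in the flat bundle $(\widetilde{S}_{hol}\times\nu_x)/\pi_1(S,x)$ equipped with the closed $2$-forms $j^1_S(\omega)$. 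Rewriting $\widetilde{S}_{hol}$ as $\widetilde{S}_{lin}$ in this conclusion yields exactly the statement of Corollary \ref{Corollary1}. The only care needed in this last step is to check that the local model built from the linear data coincides with the one from the full holonomy data, which is guaranteed by the coincidence of the two covers established above.
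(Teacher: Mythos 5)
Your proposal is correct and follows essentially the same route as the paper: the kernel $K_{lin}$ of $dh$ has finite index in the finitely generated group $\pi_1(S,x)$, hence is finitely generated (this is Schreier's Lemma, which you use implicitly), so Lemma \ref{Lemma2} applies and gives $H_{hol}=H_{lin}$, placing you in the setting of Theorem \ref{Theorem}. The additional remarks about the variation factoring through $H^2(\widetilde{S}_{lin})$ and the identification of the two covers are consistent with the paper's discussion and do not change the argument.
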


Our result is clearly related to the normal form theorem for Poisson manifolds around symplectic leaves from
\cite{CrMa}. Both results have the same conclusion, yet the conditions of Theorem \ref{Theorem} are substantially
weaker. More precisely, for regular Poisson manifolds, the hypothesis of the main result in \emph{loc.cit.} are (see
Corollary 4.1.22 and Lemma 4.1.23 \cite{Teza}):
\begin{itemize}
\item the leaf $S$ is compact,
\item $\pi_1(S,x)$ is finite,
\item the cohomological variation is an isomorphism, when viewed as a map
\[[\delta_S\omega]_x:\nu_x^*\rmap H^2(\widetilde{S}_{uni}),\]
where $\widetilde{S}_{uni}$ is the universal cover of $S$.
\end{itemize}
There is yet another essential difference between Theorem 1 and the result from \cite{CrMa}, namely, even in the
setting of Corollary \ref{Corollary1}, the result presented here is a first order result only in the world of symplectic
foliations, and not in that of Poisson structures. The information that a Poisson bivector has constant rank is not
detectable from its first jet.

A weaker version of Theorem \ref{Theorem} is part of the PhD thesis \cite{Teza} of the
second author.

\medskip

\noindent \textbf{Acknowledgments.} This research was financially supported by the ERC Starting Grant no. 279729.

\section{The local model and the cohomological variation}\label{Section}

In this section we describe the local model of a symplectic foliation around a leaf, and define the cohomological
variation of the symplectic structure on the leaves. In the case of general Poisson manifolds, the local model was first
constructed by Vorobjev \cite{Vorobjev}. The approach presented here is more direct; for the relation between these
two constructions see \cite{Teza}.

Let $(M,\mathcal{F})$ be a foliated manifold, and denote its normal bundle by
\[\nu:=TM/T\mathcal{F}.\]
Then $\nu$ carries a flat $T\mathcal{F}$ connection, called the \textbf{Bott connection}, given by
\[\nabla:\Gamma(T\mathcal{F})\times\Gamma(\nu)\rmap \Gamma(\nu),\ \ \nabla_X(\overline{Y}):=\overline{[X,Y]},\]
where, for a vector field $Z$, we denote by $\overline{Z}$ its class in $\Gamma(\nu)$. For a path $\gamma$ inside a
leaf $S$, parallel transport with respect to $\nabla$ gives the \textbf{linear holonomy} transformations:
\[dh(\gamma):\nu_{\gamma(0)}\diffto \nu_{\gamma(1)}.\]
This map depends only on $\gamma$ modulo homotopies inside $S$ with fixed endpoints. Applying $dh$ to closed loops
at $x$, we obtain the \textbf{linear holonomy group}
\[H_{lin,x}:=dh(\pi_1(S,x))\subset Gl(\nu_x).\]
The \textbf{linear holonomy cover} of a leaf $S$ at $x$, denoted by $\widetilde{S}_{lin,x}$ is the covering space
corresponding to the kernel of $dh$; thus it is a principal $H_{lin,x}$ bundle over $S$. Also, $\widetilde{S}_{lin,x}$
can be defined as the space of classes of paths in $S$ starting at $x$, where we identify two such paths if they have the
same endpoint and they induce the same holonomy transport.

The Bott connection induces a foliation $\mathcal{F}_{\nu}$ on $\nu$ whose leaves are the orbits of $dh$; i.e.\ the leaf
of $\mathcal{F}_{\nu}$ through $v\in \nu_x$ covers the leaf $S$ through $x$, and is given by
\[\widetilde{S}_{v}:=\{dh(\gamma)v  :  \gamma \textrm{ is a path in }S\textrm{ starting at }x\}.\]
Therefore, $\widetilde{S}_{lin,x}$ covers of the leaves of the foliation $\mathcal{F}_{\nu}$ above $S$ via the maps
\begin{equation}\label{Covering}
p_v:\widetilde{S}_{lin,x}\rmap \widetilde{S}_v, \ \ p_v([\gamma])=dh(\gamma)v,\ \ v\in \nu_{x}.
\end{equation}

The \textbf{local model} of the foliation around the leaf $S$ is the foliated manifold
\[(\nu_{S},\mathcal{F}_{\nu_S}), \textrm{ where }\mathcal{F}_{\nu_S}:=\mathcal{F}_{\nu}|_{\nu_S}.\]
The linear holonomy induces an isomorphism between the local model and the flat bundle from the Introduction
\[(\widetilde{S}_{lin,x}\times\nu_x)/H_{lin,x}\diffto \nu_S,\ \ [\gamma,v]\mapsto p_v([\gamma]).\]

Consider now a symplectic structure $\omega$ on the foliation $\mathcal{F}$, i.e.\ a 2-form on $T\mathcal{F}$
\[\omega\in\Omega^2(T\mathcal{F})\]
whose restriction to each leaf is symplectic. We first construct a closed foliated 2-form $\delta\omega$ on
$(\nu,\mathcal{F}_{\nu})$, which represents the derivative of $\omega$ in the transversal direction. For this, choose
an extension $\widetilde{\omega}\in \Omega^2(M)$ of $\omega$ and let
\[\Omega(X,Y):=d\widetilde{\omega}(X,Y,\cdot),\ \ X,Y\in T\mathcal{F}.\]
Since $\omega$ is closed along the leaves of $\mathcal{F}$, $\Omega(X,Y)\in\nu^*$, thus $\Omega\in
\Omega^2(T\mathcal{F};\nu^*)$.

Now, the dual of the Bott connection on $\nu^*$ induces a differential $d_{\nabla}$ on the space of foliated forms with
values in the conormal bundle $\Omega^{\bullet}(T\mathcal{F};\nu^*)$; this can be given explicitly by the classical
Koszul formula
\[d_{\nabla}:\Omega^{\bullet}(T\mathcal{F};\nu^*)\rmap \Omega^{\bullet+1}(T\mathcal{F};\nu^*),\]
\begin{align*}
d_{\nabla}\eta(X_0, \ldots , X_{p})=&\sum_{i}(-1)^{i} \nabla_{X_i}\eta(X_0, \ldots , \widehat{X}_i, \ldots , X_{p})+\\
& + \sum_{i< j} (-1)^{i+j}\eta([X_i, X_j],X_0, \ldots , \widehat{X}_i, \ldots, \widehat{X}_j, \ldots , X_{p}),
\end{align*}
for $\eta\in \Omega^{p}(T\mathcal{F};\nu^*)$, $X_i\in\Gamma(T\mathcal{F})$. Denote the resulting cohomology by
$H^{\bullet}(\mathcal{F};\nu^*)$.

It is easy to see that $\Omega$ is $d_{\nabla}$-closed. In fact, this construction can be preformed in all degrees, and it
produces a canonical map (see e.g.\ \cite{CrFe2})
\[d_{\nu}:H^{\bullet}(\mathcal{F})\rmap H^{\bullet}(\mathcal{F};\nu^*),\]
which maps $[\omega]$ to $[\Omega]$. Also, if $\widetilde{\omega}+\alpha$ is a second extension of $\omega$ (where
$\alpha$ vanishes along $\mathcal{F}$), then $\Omega$ changes by $d_{\nabla}\lambda$, where $\lambda\in
\Omega^1(T\mathcal{F};\nu^*)$, is given by \[\lambda(X):=\iota_{X}\alpha\ \  \textrm{ for } \ X\in T\mathcal{F}.\]

Note that there is a natural embedding
\[\mathcal{J}:\Omega^{\bullet}(T\mathcal{F};\nu^*)\rmap \Omega^{\bullet}(T\mathcal{F}_{\nu}),\ \ \mathcal{J}(\eta)_v:=p^*(\langle\eta,v\rangle)|_{T\mathcal{F}_{\nu}},\ \  v\in\nu,\]
where $p:\nu\to M$ is the projection. It is easy to see that under $\mathcal{J}$ the differential $d_{\nabla}$
corresponds to the leafwise de Rham differential $d_{\mathcal{F}_{\nu}}$ on the leaves of $\mathcal{F}_{\nu}$. In
particular, we obtain a closed foliated 2-form
\[\delta\omega:=\mathcal{J}(\Omega)\in \Omega^2(T\mathcal{F}_{\nu}),\]
which we call the \textbf{vertical derivative} of $\omega$. Since $\delta\omega$ vanishes on $M$ (viewed as the zero
section), it follows that $p^*(\omega)+\delta\omega$ is nondegenerate on the leaves in an open around $M$; thus
\[(\nu,\mathcal{F}_{\nu},p^*(\omega)+\delta\omega)\]
is a symplectic foliation around $M$.

Consider now a symplectic leaf $S$. Restricting $p^*(\omega)+\delta\omega$ to the leaves above $S$, we obtain
closed foliated 2-forms along the leaves of the $\mathcal{F}_{\nu_S}$, denoted by
\[j^1_S(\omega):=p^*(\omega_S)+\delta_S\omega\in\Omega^2(T\mathcal{F}_{\nu_S}),\]
where $\omega_S:=\omega|_{S}$ and $\delta_S\omega:=\delta\omega |_{\nu_S}$. Any open neighborhood of $S$ in
\[(\nu_S,\mathcal{F}_{\nu_S},j^1_S(\omega))\]
on which $j^1_S(\omega)$ is symplectic will be regarded as the \textbf{local model} of the symplectic foliation around
$S$; i.e.\ we think about the local model as a germ of a symplectic foliation around $S$.

In order to define the cohomological variation of $\omega$, consider first the linear map
\begin{equation}\label{EQ_variation}
\delta_S\omega_x: \nu_x\rmap \Omega^2_{closed}(\widetilde{S}_{lin,x}), \ \ v\mapsto  p_v^*(\delta_S\omega),
\end{equation}
where the map $p_v$ is the covering map defined by (\ref{Covering}). By the discussion above, choosing a different
extension of $\omega$ changes $p_v^*(\delta_S\omega)$ by an exact 2-form; hence the cohomology class
$[p^*_v(\delta_S\omega)]$ is independent of the 2-form $\Omega$ used to construct $\delta_{S}\omega$. The induced
linear map to the cohomology of $\widetilde{S}_{lin,x}$, will be called the \textbf{cohomological variation} of
$\omega$ at $S$
\[[\delta_S\omega]_x:\nu_{x}\rmap H^2(\widetilde{S}_{lin,x}), \ \ v\mapsto [p^*_v(\delta_S\omega)].\]
In the Introduction we denoted the lifts of $[\delta_S\omega_x]$ to the holonomy cover $\widetilde{S}_{hol}$,
respectively to the universal cover $\widetilde{S}_{uni}$ of $S$, by the same symbol.

We finish this section by proving that, up to isomorphism, the local model is independent of the choices involved. The
proof uses a version of the Moser Lemma for symplectic foliations (Lemma \ref{Lemma5} from the next section).
\begin{proposition}\label{Proposition_independence_of_extension}
Different choices of $\Omega\in \Omega^2(T\mathcal{F},\nu^*)$ satisfying $d_{\nu}[\omega]=[\Omega]$ produce
local models that are isomorphic around $S$ by a diffeomorphism that fixes $S$.
\end{proposition}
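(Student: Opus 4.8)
The plan is to reduce the statement to a Moser-type argument for symplectic foliations. First I would compare the two vertical derivatives. If $\Omega_0$ and $\Omega_1$ both satisfy $d_{\nu}[\omega]=[\Omega_i]$, then they are $d_{\nabla}$-closed and represent the same class in $H^2(\mathcal{F};\nu^*)$, hence
\[
\Omega_1-\Omega_0=d_{\nabla}\lambda,\qquad \lambda\in\Omega^1(T\mathcal{F};\nu^*).
\]
Applying the embedding $\mathcal{J}$ and using that it intertwines $d_{\nabla}$ with the leafwise differential $d_{\mathcal{F}_{\nu}}$, I obtain $\delta\omega_1-\delta\omega_0=d_{\mathcal{F}_{\nu}}\mathcal{J}(\lambda)$. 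Restricting to the leaves above $S$ and writing $\beta:=\mathcal{J}(\lambda)|_{\nu_S}\in\Omega^1(T\mathcal{F}_{\nu_S})$, the two local models share the same $p^*(\omega_S)$-part and satisfy
\[
j^1_S(\omega)_1-j^1_S(\omega)_0=d_{\mathcal{F}_{\nu_S}}\beta.
\]

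Next I would record the two facts about $\beta$ needed to control the isotopy along $S$. From the defining formula $\mathcal{J}(\eta)_v=p^*(\langle\eta,v\rangle)|_{T\mathcal{F}_{\nu}}$, the form $\mathcal{J}(\eta)$ depends linearly on $v$ and therefore vanishes on the zero section; hence $\beta|_S=0$, and in particular both $j^1_S(\omega)_i$ restrict to $\omega_S$ along $S$, so they coincide there. I would then consider the affine path
\[
\omega_t:=j^1_S(\omega)_0+t\,d_{\mathcal{F}_{\nu_S}}\beta=(1-t)\,j^1_S(\omega)_0+t\,j^1_S(\omega)_1,\qquad t\in[0,1].
\]
Each $\omega_t$ is $d_{\mathcal{F}_{\nu_S}}$-closed, and since all the $\omega_t$ agree with the symplectic form $\omega_S$ along $S$, leafwise nondegeneracy---being an open condition---holds for every $t$ on a common neighborhood of $S$. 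Thus $\omega_t$ is a path of symplectic foliations near $S$.

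Finally I would run the Moser argument via Lemma \ref{Lemma5}. Since $\omega_t$ is leafwise nondegenerate, the equation $\iota_{X_t}\omega_t=-\beta$ determines a unique time-dependent vector field $X_t$ tangent to $\mathcal{F}_{\nu_S}$; as $\beta$ vanishes on $S$, so does $X_t$. The leafwise Cartan formula gives $\mathcal{L}_{X_t}\omega_t=d_{\mathcal{F}_{\nu_S}}\iota_{X_t}\omega_t=-d_{\mathcal{F}_{\nu_S}}\beta=-\tfrac{d}{dt}\omega_t$, so the flow $\phi_t$ of $X_t$ satisfies $\phi_t^*\omega_t=\omega_0$; being the flow of a foliated vector field vanishing on $S$, $\phi_1$ is an isomorphism of foliations fixing $S$ pointwise and pulling $j^1_S(\omega)_1$ back to $j^1_S(\omega)_0$. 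The one genuine difficulty is that $S$ may be noncompact, so the flow of $X_t$ need not exist on a fixed neighborhood of $S$ for all $t\in[0,1]$; this existence, together with the fact that a foliated isotopy fixing $S$ is enough to identify the germs, is exactly the content of the Moser Lemma \ref{Lemma5}, which I would invoke to conclude.
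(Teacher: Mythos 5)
Your proposal is correct and follows essentially the same route as the paper: both reduce the statement to the Moser Lemma for symplectic foliations (Lemma \ref{Lemma5}) applied to the foliated primitive $\mathcal{J}(\lambda)$, which vanishes on the zero section. The only cosmetic difference is that the paper invokes Lemma \ref{Lemma5} once on all of $(\nu,\mathcal{F}_{\nu},p^*(\omega)+\delta\omega)$ and then restricts the resulting foliated diffeomorphism over $S$, whereas you restrict to $\nu_S$ first and also re-derive part of the Moser argument that Lemma \ref{Lemma5} already packages.
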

\begin{proof}
A second 2-form is of the form
\[\Omega'=\Omega+d_{\nabla}\lambda,\]
for some $\lambda\in \Omega^1(T\mathcal{F};\nu^*)$. We apply the Lemma \ref{Lemma5} to the symplectic foliation
\[(\nu,\mathcal{F}_{\nu},p^*(\omega)+\delta\omega),\]
and the foliated 1-form $\alpha:=\mathcal{J}(\lambda)$ which vanishes along $M$. The resulting diffeomorphism is
foliated. In particular, above any leaf $S$ of $\mathcal{F}$ it sends the local model corresponding to $\Omega$ to the
local model corresponding to $\Omega'$.
\end{proof}

\section{Five lemmas}

In this section we prove some auxiliary results used in the proof of Theorem \ref{Theorem}.

\medskip

\noindent\textbf{Reeb Stability around non-compact leaves}

\medskip

Consider a foliated manifold $(M,\mathcal{F})$ and let $S$ be an embedded leaf. The classical Reeb Stability Theorem
(see e.g.\ \cite{MM}) says that, if the holonomy group $H_{hol}$ is finite and $S$ is compact, then a saturated
neighborhood of $S$ in $M$ is isomorphic as a foliated manifold to the flat bundle
\begin{equation*}
(\widetilde{S}_{hol}\times T)/H_{hol},
\end{equation*}
where $T$ is a small transversal that is invariant under the holonomy action of $H_{hol}$. Since actions of finite
groups can be linearized, it follows that the holonomy of $S$ equals the linear holonomy of $S$. So, some neighborhood
of $S$ in $(M,\mathcal{F})$ is isomorphic as a foliated manifold with the flat bundle from the previous section
\begin{equation}\label{Model_lin}
(\widetilde{S}_{lin}\times \nu_x)/H_{lin}.
\end{equation}

Below we show that the proof of the Reeb Stability Theorem from \cite{MM} can be adapted to the non-compact case,
at the expense of saturation of the open.

\begin{lemma}\label{Lemma1}
Let $(M,\mathcal{F})$ be a foliation and let $S\subset M$ be an embedded leaf. If $S$ has finite holonomy, then an
open neighborhood of $S$ in $M$ is isomorphic as a foliated space to an open around $S$ in the local model
(\ref{Model_lin}), by a diffeomorphism that fixes $S$.
\end{lemma}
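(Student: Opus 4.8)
The plan is to adapt the classical proof of Reeb Stability, as presented in \cite{MM}, but to replace the compactness of $S$ with a careful local-to-global argument that produces a (non-saturated) open neighborhood. The starting observation is that the statement is entirely about the foliated structure; the symplectic form plays no role here, so I may work purely with $(M,\mathcal{F})$ and the linear holonomy. Since the holonomy group $H_{hol}$ is finite, it can be linearized, so it agrees with the linear holonomy $H_{lin}$; thus it suffices to build a foliated diffeomorphism onto an open in the flat bundle $(\widetilde{S}_{lin}\times\nu_x)/H_{lin}$.

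First I would fix a point $x\in S$ and a small transversal $T$ at $x$, on which $H_{lin,x}$ acts linearly (after linearization) with $x$ as a fixed point. The heart of the compact proof is the construction of the holonomy representation and a family of ``suspension'' charts: one covers $S$ by finitely many foliation charts, follows a loop, and shows the return map is conjugate into the finite group. In the non-compact case the finiteness of holonomy still guarantees that, for a fixed loop, the holonomy germ is realized by an honest diffeomorphism of a fixed-size transversal; what fails is the uniform choice over all of $S$. My plan is therefore to work on the linear holonomy cover $\widetilde{S}_{lin}$, which is a principal $H_{lin}$-bundle over $S$, and to build the map on the cover equivariantly. Concretely, I would construct a smooth $H_{lin}$-equivariant map
\[
\Phi:\widetilde{S}_{lin}\times T_0\rmap M,
\]
where $T_0\subseteq T$ is a sufficiently small (possibly shrinking) neighborhood of $x$, by sending $([\gamma],t)$ to the endpoint of the $\mathcal{F}$-plaque-path obtained by transporting $t$ along $\gamma$. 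The finiteness of $H_{lin}$ is what lets me average a metric (or a tubular neighborhood datum) to make the transversal $H_{lin}$-invariant, so that this transport descends to the quotient flat bundle.

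The main steps, in order, are: (i) linearize the finite holonomy action and identify an $H_{lin}$-invariant transversal $T_0\cong \nu_x$; (ii) define the transport map $\Phi$ on the cover and check it is well defined (independent of the representative path, because paths with the same linear holonomy act identically on the linearized transversal — this is exactly how $\widetilde{S}_{lin}$ is defined in section \ref{Section}); (iii) verify $\Phi$ is $H_{lin}$-equivariant and hence descends to a map $\overline{\Phi}$ from an open in $(\widetilde{S}_{lin}\times\nu_x)/H_{lin}$; (iv) show $\overline{\Phi}$ is a foliated local diffeomorphism along $S$ (it is the identity on $S$ and a linear isomorphism on each transversal $\nu_x$ by construction), so by the inverse function theorem it is a diffeomorphism onto an open neighborhood of $S$. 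The only genuinely delicate point is (ii)/(iv) together: ensuring the transport is defined on a transversal of \emph{uniform enough} size along $S$ so that $\overline{\Phi}$ is defined on a full open neighborhood rather than on a set that pinches to $S$ at infinity.

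The hard part will be precisely this uniformity, since without compactness the plaque sizes and the domains of the holonomy diffeomorphisms need not be bounded below along $S$. I expect to resolve it by exploiting the finiteness of $H_{lin}$ more forcefully: because the holonomy group is finite, there are only finitely many distinct germs of holonomy diffeomorphisms, and each is realized on a fixed-radius transversal; passing to the cover turns the problem into extending a single $H_{lin}$-equivariant germ at $x$, which can be realized on a neighborhood of the whole zero section by equivariance. This is why the conclusion must allow a non-saturated open — one cannot in general fatten $\overline{\Phi}$'s image to a saturated neighborhood, which is exactly the price paid relative to the classical (compact, saturated) statement. This matches the remark in the excerpt that non-compactness costs saturation but nothing else.
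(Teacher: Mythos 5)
Your overall strategy is the same as the paper's: reduce to a tubular neighborhood, work $H$-equivariantly on the holonomy cover, define a map by holonomy transport of a transversal, linearize the finite group action (Bochner), and descend to the flat bundle. You have also correctly isolated the crux: without compactness the domains of the holonomy transports $h_{x_0}^{x}(\gamma)$ need not be bounded below as $x$ runs over $S$. But your proposed resolution of this point does not work. Finiteness of $H$ controls the finitely many \emph{germs at $x_0$} of the return holonomies; it says nothing about the transports to distant points $x$, which are composites of unboundedly many local holonomies and whose domains genuinely shrink as the path gets longer. Likewise, ``extending a single equivariant germ by equivariance'' cannot help: $H$ acts along the finite fibers of $\widetilde{S}\to S$, so equivariance only relates the finitely many lifts over a fixed base point and gives no uniformity as the base point escapes to infinity. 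In fact the uniform tube you are aiming for (a product $\widetilde{S}\times T_0$ with $T_0$ fixed) is in general unattainable, and it is also not needed: an open neighborhood of $S$ is allowed to pinch transversally at infinity. The paper's actual mechanism is an exhaustion argument that you are missing: choose a locally finite cover of $S$ by relatively compact foliation charts, observe that there are only \emph{finitely many chains of each bounded length}, and use this to build a nested sequence of transversal opens $O_1\supset O_2\supset\cdots$ on which all holonomies along chains of length at most $n$ are defined and mutually compatible; the map is then defined on the non-uniform open $\bigcup_n \widetilde{S}_n\times O_{n+c}$, where $\widetilde{S}_n$ consists of points of $\widetilde{S}$ reachable by chains of length at most $n$. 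Without some such device your map $\Phi$ has no well-defined domain containing $\widetilde{S}\times\{x_0\}$, and even its well-definedness (independence of the representing path, which holds a priori only at the level of germs) requires the same shrinking.

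A secondary gap: in step (iv) you conclude injectivity of $\overline{\Phi}$ from the inverse function theorem, but that only yields a \emph{local} diffeomorphism. For a non-compact embedded leaf, a foliated local diffeomorphism fixing $S$ need not be injective on any neighborhood without further argument; the paper proves injectivity by hand, comparing the two chains representing $\widetilde{x}$ and $\widetilde{x}g$ and using the compatibility properties of the opens $O_n$. You would need to supply an analogous argument.
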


\begin{proof}
Since the holonomy is finite, it equals the linear holonomy, and we denote $H:=H_{hol}=H_{lin}$ and
$\widetilde{S}:=\widetilde{S}_{hol}=\widetilde{S}_{lin}$.

The assumption that $S$ be embedded allows us to restrict to a tubular neighborhood; so we assume that the foliation
is on a vector bundle $p:E\to S$ (with $E\cong \nu_{S}$), for which $S$, identified with the zero section, is a leaf. Then
the holonomy of paths in $S$ is represented by germs of a diffeomorphism between the fibers of $E$.

Each point in $S$ has an open neighborhood $U\subset E$ satisfying
\begin{itemize}
\item $S\cap U$ is 1-connected,
\item for $x\in S\cap U$, $E_x\cap U$ is a connected neighborhood of $x$,
\item for every $x,y\in S\cap U$, the holonomy along any path in $S\cap U$ connecting them is defined as a diffeomorphism between the spaces
\[h_{x}^y:E_x\cap U\diffto E_y\cap U.\]
\end{itemize}

Let $\mathfrak{U}$ be locally finite cover of $S$ by opens $U\subset E$ of the type just described, such that for all
$U,U'\in\mathfrak{U}$, $U\cap U'\cap S$ is connected (or empty), and such that each $U\in \mathfrak{U}$ is relatively
compact.

We fix $x_0\in S$, $U_0\in\mathfrak{U}$ an open containing $x_0$, and denote by
\[V:=E_{x_0}.\]
Consider a path $\gamma$ in $S$ starting at $x_0$ and with endpoint $x$. Cover the path by a chain of opens in
$\mathfrak{U}$
\[\xi=(U_{0},\ldots, U_{k(\xi)}),\]
such that there is a partition
\[0=t_0<t_1<\ldots t_{k-1}<t_k=1,\]
with $\gamma([t_{j-1},t_{j}])\subset U_{j}$. Since the holonomy transformations inside $U_{j}$ are all trivial, and all
the intersections $U_i\cap U_j\cap S$ are connected, it follows that the holonomy of $\gamma$ only depends on the
chain $\xi$ and is defined as an embedding
\[h(\gamma)=h_{x_0}^x(\xi):O(\xi)\hookrightarrow E_x,\]
where $O(\xi)\subset V$ is an open neighborhood of $x_0$, which is independent of $x\in U_{k(\xi)}$. Denote by
$\mathcal{Z}$ the space of all chains in $\mathfrak{U}$
\[ \xi=(U_{0}, \ldots, U_{k(\xi)}),\ \  \textrm{with} \ U_{l}\cap U_{l+1}\neq \emptyset.\]

Denote by $K$ the kernel of $\pi_1(S,x_0)\to H$. The holonomy cover $\widetilde{S}\to S$ can be described as the
space of all paths $\gamma$ in $S$ starting at $x_0$, and two such paths $\gamma_1$ and $\gamma_2$ are equivalent
if they have the same endpoint, and the homotopy class of $\gamma_2^{-1}\circ \gamma_1$ lies in $K$. The projection
is then given by $[\gamma]\mapsto \gamma(1)$. Denote by $\widetilde{x}_0$ the point in $\widetilde{S}$
corresponding to the constant path at $x_0$. So, we can represent each point in $\widetilde{S}$ (not uniquely!) by a
pair $(\xi,x)$ with $\xi \in \mathcal{Z}$ and endpoint $x\in U_{k(\xi)}\cap S$.

The group $H$ acts freely on $\widetilde{S}$ by pre-composing paths. For every $g\in H$ fix a chain
$\xi_g\in\mathcal{Z}$, such that $(\xi_g,x_0)$ represents $\widetilde{x}_0g$. Consider the open
\[\widetilde{O}_0:=\bigcap_{g\in H}O(\xi_{g})\subset V,\]
on which all holonomies $h_{x_0}^{x_0}(\xi_{g})$ are defined, and a smaller open $\widetilde{O}_1\subset
\widetilde{O}_0$ around $x_0$, such that $h_{x_0}^{x_0}(\xi_{g})$ maps $\widetilde{O}_1$ into $\widetilde{O}_0$.
Hence the composition
\begin{equation*}
h_{x_0}^{x_0}(\xi_g)\circ h_{x_0}^{x_0}(\xi_h): \widetilde{O}_{1}\hookrightarrow V,
\end{equation*}
is well defined. Since the germs of $h_{x_0}^{x_0}(\xi_g)\circ h_{x_0}^{x_0}(\xi_h)$ and $h_{x_0}^{x_0}(\xi_{gh})$
are the same, by shrinking $\widetilde{O}_1$ if necessary, we may assume that
\begin{equation}\label{composition}
h_{x_0}^{x_0}(\xi_g)\circ h_{x_0}^{x_0} (\xi_h)=h_{x_0}^{x_0}(\xi_{gh}): \widetilde{O}_{1}\hookrightarrow V, \ \ \ \ \forall \ g,h\in H.
\end{equation}
Consider the following open
\[O:=\bigcap_{g\in H}h_{x_0}^{x_0}(\xi_g)(\widetilde{O}_{1}).\]
Then $O\subset \widetilde{O}_{1}$, and for $h\in H$, we have that
\begin{align*}
h_{x_0}^{x_0}(\xi_{h})(O)&\subseteq  \bigcap_{g\in H}h_{x_0}^{x_0}(\xi_{h})\circ h_{x_0}^{x_0}(\xi_{g})(\widetilde{O}_{1})
=\\
&=\bigcap_{g\in H}h_{x_0}^{x_0}(\xi_{hg})(\widetilde{O}_{1})=\bigcap_{g\in H}h_{x_0}^{x_0}(\xi_{g})(\widetilde{O}_{1})= O.
\end{align*}
So $h_{x_0}^{x_0}(\xi_h)$ maps $O$ to $O$, and by (\ref{composition}) it follows that the holonomy transport along
$\xi_g$ defines an action of $H$ on $O$, which we further denote by
\[h(g):=h_{x_0}^{x_0}(\xi_g):O\diffto O.\]
Since $H$ is a finite group acting on $O$ with a fixed point $x_0$, by Bochner's Linearization Theorem, we can
linearize the action around $x_0$. So, by shrinking $O$ if necessary, the action is isomorphic to the linear holonomy
action of $H$ on $V$. In particular, this implies that $O$ contains arbitrarily small $H$-invariant open neighborhoods
of $x_0$.

Since $\mathfrak{U}$ is a locally finite cover by relatively compact opens, there are only finitely many chains in
$\mathcal{Z}$ of a certain length. Denote by $\mathcal{Z}_n$ the set of chains of length at most $n$. Let $c\geq 1$
be such that $\xi_g\in \mathcal{Z}_c$ for all $g\in H$.

By the above, and by the basic properties of holonomy, there exist open neighborhoods $\{O_n\}_{n\geq 1}$ of $x_0$
in $O$:
\[\ldots \subset O_{n+1}\subset O_n\subset O_{n-1}\subset \ldots \subset O_1\subset O\subset V,\]
satisfying the following:
\begin{enumerate}[1)]
\item for every chain $\xi\in \mathcal{Z}_n$, $O_n\subset O({\xi})$,
\item for every two chains $\xi,\xi'\in \mathcal{Z}_n$ and $x\in U_{k(\xi)}\cap U_{k(\xi')}\cap S$, such that the pairs $(\xi,x)$ and
$(\xi',x)$ represent the same element in $\widetilde{S}$, we have that
\[h_{x_0}^x(\xi)=h_{x_0}^x(\xi'):O_n\hookrightarrow E_x,\]
\item $O_n$ is $H$-invariant,
\item for every $g\in H$, $\xi\in\mathcal{Z}_{n}$ and $x\in U_{k(\xi)}\cap S$, we have that
\[h_{x_0}^x(\xi_g\cup \xi)=h_{x_0}^x(\xi)\circ h(g): O_{n+c}\hookrightarrow E_{x}.\]
\end{enumerate}

Denote by $\widetilde{S}_n$ the set of points in $\widetilde{x}\in \widetilde{S}$ for which every element in the orbit
$\widetilde{x}H$ can be represented by a pair $(\xi,x)$ with $\xi\in\mathcal{Z}_n$. Note that for $n\geq c$,
$\widetilde{S}_n$ is nonempty, $H$-invariant, open, and connected. Consider the following $H$-invariant open
neighborhood of $\widetilde{S}\times\{x_0\}$:
\[\mathcal{V}:=\bigcup_{n\geq c}\widetilde{S}_n\times O_{n+c}\subset \widetilde{S}\times V.\]
On $\mathcal{V}$ we define the map
\[\widetilde{\mathcal{H}}:\mathcal{V}\rmap E,  \ \ \ \widetilde{\mathcal{H}}(\widetilde{x},v):=h_{x_0}^x(\xi)(v),\]
for $(\widetilde{x},v)\in\widetilde{S}_n\times O_{n+c}$, where $(\xi,x)$ is pair representing $\widetilde{x}$ with
$\xi\in\mathcal{Z}_{n}$ and $x\in U_{k(\xi)}$. By the properties of the opens $O_n$, $\widetilde{\mathcal{H}}$ is
well defined. Since the holonomy transport is by germs of diffeomorphisms and preserves the foliation, it follows that
$\widetilde{\mathcal{H}}$ is a foliated local diffeomorphism, which sends the trivial foliation on $\mathcal{V}$ with
leaves $\mathcal{V}\cap \big(\widetilde{S}\times\{v\}\big)$ to $\mathcal{F}|_{E}$.

We prove now that $\widetilde{\mathcal{H}}$ is $H$-invariant. Let $(\widetilde{x},v)\in\widetilde{S}_n\times
O_{n+c}$ and $g\in H$. Consider chains $\xi$ and $\xi'$ in $\mathcal{Z}_{n}$ representing $\widetilde{x}$ and
$\widetilde{x}g$ respectively, with $x\in U_{k(\xi)}\cap U_{k(\xi')}\cap S$. Then $\xi'$ and $\xi_g\cup \xi$ both belong
to $\mathcal{Z}_{n+c}$ and $(\xi',x)$, $(\xi_g\cup\xi,x)$ both represent $\widetilde{x}g\in\widetilde{S}$. Using
properties 2) and 4) of the opens $O_n$, we obtain $H$-invariance:
\begin{align*}
\widetilde{\mathcal{H}}(\widetilde{x}g,h(g^{-1})v)&=h_{x_0}^x(\xi')(h(g^{-1})v)=h_{x_0}^x(\xi_g\cup \xi)(h(g^{-1})v)=\\
&=h_{x_0}^x(\xi)\circ h(g)\circ h(g^{-1})v=h_{x_0}^x(\xi)(v)=\widetilde{\mathcal{H}}(\widetilde{x},v).
\end{align*}

Since the action of $H$ on $\mathcal{V}$ is free and preserves the foliation on $\mathcal{V}$, we obtain an induced
local diffeomorphism of foliated manifolds:
\[\mathcal{H}: \mathcal{V}/H\subset (\widetilde{S}\times V)/H \rmap E.\]

We prove now that $\mathcal{H}$ is injective. Let $(\widetilde{x},v), (\widetilde{x}',v')\in \mathcal{V}$ be such that
\[\widetilde{\mathcal{H}}(\widetilde{x},v)=\widetilde{\mathcal{H}}(\widetilde{x}',v').\]
Denoting by $x=p(\widetilde{\mathcal{H}}(\widetilde{x},v))=p(\widetilde{\mathcal{H}}(\widetilde{x}',v'))$, we have
that $\widetilde{\mathcal{H}}(\widetilde{x},v)$, $\widetilde{\mathcal{H}}(\widetilde{x}',v')\in E_x$. Hence
$\widetilde{x}$ and $\widetilde{x}'$, both lie in the fiber of $\widetilde{S}\to S$ over $x$, thus there is a unique $g\in
H$ with $\widetilde{x}'=\widetilde{x}g$. Let $n,m\geq c$ be such that $(\widetilde{x},v)\in \widetilde{S}_n\times
O_{n+c}$ and $(\widetilde{x}',v')\in \widetilde{S}_m\times O_{m+c}$, and assume also that $n\leq m$. Consider
$\xi\in\mathcal{Z}_n$ and $\xi'\in \mathcal{Z}_m$ such that $(\xi,x)$ represents $\widetilde{x}$ and $(\xi',x)$
represents $\widetilde{x}'$. Then we have that
\begin{equation}\label{EQ10}
h_{x_0}^x(\xi)(v)=h_{x_0}^x(\xi')(v').
\end{equation}
Since both $(\xi',x)$ and $(\xi_g\cup \xi,x)$ represent $\widetilde{x}'\in \widetilde{S}$, and both have length $\leq
m+c$, again by the properties 2) and 4) we obtain
\[h_{x_0}^x(\xi')(v')=h_{x_0}^x(\xi_g\cup \xi)(v')=h_{x_0}^x(\xi)(h(g)(v')).\]
Since $h_{x_0}^x(\xi)$ is injective, (\ref{EQ10}) implies that $v=h(g)(v')$. So, we obtain
\[(\widetilde{x},v)=(\widetilde{x}'g^{-1},h(g)(v')),\]
which proves injectivity of $\mathcal{H}$.
\end{proof}

\noindent\textbf{Thurston Stability around non-compact leaves}

\medskip

To obtain the first order normal form result (Corollary \ref{Corollary1}), we will use the following extension to
non-compact leaves of a result of Thurston \cite{Thurston}.

\begin{lemma}\label{Lemma2}
Let $S$ be an embedded leaf of a foliation such that $K_{lin}$, the kernel of $dh:\pi_1(S,x)\to H_{lin}$, is finitely
generated and $H^1(\widetilde{S}_{lin})=0$. Then the holonomy group $H_{hol}$ of $S$ coincides with the linear
holonomy group $H_{lin}$ of $S$.
\end{lemma}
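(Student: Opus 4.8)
The plan is to show that the holonomy group $H_{hol}$ equals the linear holonomy group $H_{lin}$ by proving that the natural surjection $H_{hol}\twoheadrightarrow H_{lin}$ (which exists because linear holonomy is obtained from full holonomy by taking the derivative at $x$) is injective. Equivalently, since both groups are quotients of $\pi_1(S,x)$, it suffices to show that the holonomy cover $\widetilde{S}_{hol}$ coincides with the linear holonomy cover $\widetilde{S}_{lin}$, i.e.\ that $K_{hol}=K_{lin}$, where $K_{hol}$ and $K_{lin}$ are the kernels of the full and linear holonomy representations. Since $K_{hol}\subseteq K_{lin}$ always holds, the content is to prove $K_{lin}\subseteq K_{hol}$: every loop with trivial linear holonomy in fact has trivial full holonomy germ.

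First I would reduce to a statement about the linear holonomy cover. Pulling the foliation back to $\widetilde{S}_{lin}$, the linear holonomy becomes trivial, so the associated normal representation $dh$ of $\pi_1(\widetilde{S}_{lin})=K_{lin}$ is trivial. This means that over $\widetilde{S}_{lin}$ we are in the situation of a leaf whose \emph{linear} holonomy vanishes, and the goal becomes to show that its \emph{full} holonomy also vanishes; this is precisely the setting of Thurston's original stability theorem, whose hypothesis is that $H^1$ of the leaf vanishes. The two standing assumptions are tailored to make this work: $H^1(\widetilde{S}_{lin})=0$ is exactly the cohomological vanishing Thurston needs, and the finite generation of $K_{lin}=\pi_1(\widetilde{S}_{lin})$ replaces the compactness hypothesis that appears in the classical formulation.

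The core of the argument is therefore an adaptation of Thurston's theorem to a possibly non-compact leaf under a finite-generation hypothesis on $\pi_1$. The strategy is the standard one: the full holonomy defines a homomorphism from $K_{lin}$ into the group of germs at $0\in\nu_x$ of diffeomorphisms fixing $0$, and the linear-holonomy-trivial hypothesis says this homomorphism lands in the subgroup $G^1$ of germs tangent to the identity to first order. Thurston's key analytic input is that $G^1$, equipped with a suitable topology, is \emph{locally indicable} / admits no nontrivial homomorphisms from groups satisfying the appropriate finiteness and $H^1$-vanishing conditions; more precisely one extracts from a hypothetically nontrivial homomorphism a nonzero class in $H^1$ of the group (hence of $\widetilde{S}_{lin}$) by a commutator/derivative construction, contradicting $H^1(\widetilde{S}_{lin})=0$. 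I would follow Thurston's commutator-collapsing scheme, using finite generation to carry out the estimates on a finite generating set rather than relying on compactness of the leaf.

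The hard part will be making Thurston's argument go through without compactness. In the classical proof, compactness is used both to control the holonomy germs uniformly (so that compositions stay defined on a fixed neighborhood) and to guarantee finite generation of $\pi_1$. Here finite generation is assumed directly, so the remaining obstacle is the uniform control: I would need to check that, working with a fixed finite generating set of $K_{lin}$ and the corresponding finitely many holonomy germs, one can choose a common domain of definition on which all the needed compositions and derivative estimates make sense, exactly as in the construction of the opens $O_n$ in Lemma \ref{Lemma1}. Once this uniform domain is secured, Thurston's vanishing-of-$H^1$ obstruction applies verbatim and forces the full holonomy representation of $K_{lin}$ to be trivial, giving $K_{lin}=K_{hol}$ and hence $H_{hol}=H_{lin}$.
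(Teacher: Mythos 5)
Your proposal follows essentially the same route as the paper: pass to the linear holonomy cover (where the linear holonomy of the lifted leaf is trivial, $H^1$ vanishes, and $\pi_1\cong K_{lin}$ is finitely generated), invoke Thurston's stability argument with finite generation of $\pi_1$ substituting for compactness to kill the full holonomy there, and project back down to conclude $K_{lin}\subseteq K_{hol}$. The paper is in fact terser than you are about the adaptation of Thurston's argument --- it simply asserts that his proof uses only finite generation rather than compactness --- so your extra care about securing a common domain of definition for the finitely many generating germs is consistent with, and slightly more explicit than, what is written there.
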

\begin{proof}
Denote by $V:=\nu_x$, the normal space at some $x\in S$. The linear holonomy gives an identification of the normal
bundle of $S$ in $M$ with the vector bundle $(\widetilde{S}_{lin}\times V)/H_{lin}$. Passing to a tubular
neighborhood, we may assume that the foliation $\mathcal{F}$ is on $(\widetilde{S}_{lin}\times V)/H_{lin}$, and that
its linear holonomy coincides with the holonomy of the flat bundle, i.e.\ the first order jet along $S$ of $\mathcal{F}$
equals the first order jet along $S$ of flat bundle foliation. Consider the covering map
\[p:\widetilde{S}_{lin}\times V\rmap (\widetilde{S}_{lin}\times V)/H_{lin}.\]
The leaf $\widetilde{S}_{0}:=\widetilde{S}_{lin}\times\{0\}$ of the pull-back foliation $p^*(\mathcal{F})$ on
$\widetilde{S}_{lin}\times V$ satisfies:
\begin{enumerate}[(1)]
\item $\widetilde{S}_{0}$ has trivial linear holonomy;
\item $H^1(\widetilde{S}_{0})=0$;
\item $\pi_1(\widetilde{S}_{0})\cong K_{lin}$ is finitely generated.
\end{enumerate}
Thurston shows in \cite{Thurston} that, under the assumption that $\widetilde{S}_0$ is compact, the first two
conditions imply that the holonomy group of $\widetilde{S}_0$ vanishes. It is straightforward to check that Thurston's
argument actually doesn't use the compactness assumption, but it only uses condition (3); and we conclude that also in
our case the holonomy at $\widetilde{S}_0$ of $p^*(\mathcal{F})$ vanishes.

Now consider a loop $\gamma$ in $S$ based at $x$ such that $[\gamma]\in K_{lin}$. This is equivalent to saying that
$\gamma$ lifts to a loop in $\widetilde{S}_{lin}$, hence to a loop $\widetilde{\gamma}$ in $\widetilde{S}_0$. The
holonomy transport along $\widetilde{\gamma}$ induced by $p^*(\mathcal{F})$ projects to the holonomy transport of
$\gamma$ induced by $\mathcal{F}$, and since the first is trivial, so is the latter. This proves that $K_{lin}$ is
included in the kernel of $\pi_1(S,x)\to H_{hol}$, and since the other inclusion always holds, we obtain that
$H_{hol}=H_{lin}$.
\end{proof}

\noindent\textbf{Foliated cohomology of products}

\medskip

Let $M$ and $N$ be two manifolds. Consider the product foliation $TM\times N$ on $M\times N$, with leaves
\[M\times\{y\}\subset M\times N,\ \ y\in N.\]
We denote the complex computing the corresponding foliated cohomology by
\[\big(\Omega^{\bullet}(TM\times N),d\big).\]
The elements of $\Omega^{\bullet}(TM\times N)$ can be regarded as smooth families of forms on $M$:
\[\eta=\left\{\eta_y\in \Omega^{\bullet}(M)\right\}_{y\in N}\ \ \textrm{with} \ \ d\eta=\left\{d\eta_y\in \Omega^{\bullet+1}(M)\right\}_{y\in N}.\]
Denote the corresponding cohomology groups by
\[H^{\bullet}(TM\times N).\]

We need two versions of these groups associated to a leaf $M\times\{x\}$, for a fixed $x\in N$. Denote the subcomplex
of foliated forms vanishing on $M\times\{x\}$ by
\[\big(\Omega^{\bullet}_{x}(TM\times N),d\big),\]
and the associated cohomology by
\[H^{\bullet}_{x}(TM\times N).\]
Finally, consider the complex of germs at $M\times\{x\}$ of foliated forms
\[\big(\Omega^\bullet_{\mathrm{g}_{x}}(TM\times N),d\big).\]
This space is the quotient of $\Omega^\bullet(TM\times N)$ by the space of foliated forms that vanish on some open in
$M\times N$ that contains $M\times\{x\}$. The leafwise de Rham differential induces a differential on
$\Omega^\bullet_{\mathrm{g}_{x}}(TM\times N)$. Denote the resulting cohomology by
\[H^{\bullet}_{\mathrm{g}_{x}}(TM\times N).\]
Let also $C^{\infty}_{x}(N)$ denote the space of smooth functions on $N$ vanishing at $x$, and
$C^{\infty}_{\mathrm{g}_{x}}(N)$ denote the space of germs of smooth functions on $N$ around $x$.

These three versions of foliated cohomology come with natural pairings with the homology of $M$, which yield maps:
\begin{align}\label{pairing}
\nonumber &\Psi: H^{\bullet}(TM\times N)\rmap \mathrm{Hom}(H_{\bullet}(M);C^{\infty}(N)),\\
&\Psi_x:H^{\bullet}_{x}(TM\times N)\rmap  \mathrm{Hom}(H_{\bullet}(M);C^{\infty}_{x}(N)),\\
\nonumber &\Psi_{\mathrm{g}_{x}}:H^{\bullet}_{\mathrm{g}_{x}}(TM\times N)\rmap \mathrm{Hom}(H_{\bullet}(M);C^{\infty}_{\mathrm{g}_{x}}(N)).
\end{align}
We explain the third map; the first two are constructed similarly. Consider an element $[\eta]\in
H^q_{\mathrm{g}_{x}}(TM\times N)$, which is represented by a foliated $q$-form $\eta$ that is closed on some open
containing $M\times\{x\}$. We define the corresponding linear map:
\[\Psi_{\mathrm{g}_{x}}([\eta]):H_{q}(M)\rmap C_{\mathrm{g}_{x}}^{\infty}(N).\]
Represent an element $[c]\in H_{q}(M)$ as $c=\sum_i a_i \sigma_i$, where $\sigma_i:\Delta_q\to M$ are smooth
$q$-simplices. Define
\[\langle \eta, c\rangle \in C^{\infty}(N),\ \ y\mapsto \sum_i a_i \int_{\Delta_q}(\sigma_i \times \{y\})^*(\eta).\]
The germ at $x$ of the function $\langle \eta, c\rangle$ is independent of the choice of the representatives, yielding a
well-defined element $\Psi_{\mathrm{g}_{x}}([\eta])([c]):=\langle [\eta],[c]\rangle\in C^{\infty}_{\mathrm{g}_x}(N)$.

\begin{lemma}\label{Lemma3}
The maps from (\ref{pairing}) are linear isomorphisms.
\end{lemma}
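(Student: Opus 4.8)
The plan is to recognize all three maps as instances of the de Rham theorem \emph{with coefficients}, the coefficient space being respectively $R=C^{\infty}(N)$, $R=C^{\infty}_{x}(N)$, or $R=C^{\infty}_{\mathrm{g}_{x}}(N)$. The first step is to identify each foliated complex with a complex of $R$-valued forms on $M$: a foliated form $\eta=\{\eta_y\}$ is nothing but a differential form on $M$ with values in the function space $R$, and the leafwise differential becomes the ordinary de Rham differential $d_M$ acting $R$-linearly, so that the complex becomes $(\Omega^{\bullet}(M;R),d_M)$. Under this identification all three pairings $\Psi,\Psi_x,\Psi_{\mathrm{g}_x}$ become the single operation of integrating an $R$-valued closed form over a smooth singular cycle of $M$. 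The statement thus reduces to the assertion that this integration pairing induces an isomorphism $H^{\bullet}(\Omega^{\bullet}(M;R),d_M)\to \mathrm{Hom}(H_{\bullet}(M;\mathbb{R}),R)$.

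Second, I would prove this $R$-coefficient de Rham theorem by the sheaf-theoretic route, which is robust enough to handle noncompact $M$ of infinite topological type. The complex $\Omega^{\bullet}(M;R)$ is a fine resolution of the locally constant sheaf on $M$ with stalk $R$: it is fine because $R$-valued forms are a module over $C^{\infty}(M)$ and hence admit partitions of unity, and it is a resolution by the Poincar\'e lemma. The only local input, exactness of the resolution, holds with $R$-coefficients because the Poincar\'e homotopy operator integrates purely in the $M$-directions and is therefore $R$-linear, preserving smoothness in $N$, the condition of vanishing at $x$, and the passage to germs. Consequently $H^{\bullet}(\Omega^{\bullet}(M;R),d_M)$ computes the sheaf cohomology of the constant sheaf with stalk $R$, which for the manifold $M$ agrees with the singular cohomology $H^{\bullet}(M;R)$, and under this agreement the integration map is identified with the comparison between the de Rham and the smooth singular complexes.

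Third, I would invoke the universal coefficient theorem to rewrite $H^{q}(M;R)\cong \mathrm{Hom}(H_q(M;\mathbb{Z}),R)$. Here the $\mathrm{Ext}$ term vanishes because $R$ is a real vector space, hence divisible and torsion-free, so $\mathrm{Ext}^{1}_{\mathbb{Z}}(-,R)=0$; and since $H_q(M;\mathbb{R})=H_q(M;\mathbb{Z})\otimes_{\mathbb{Z}}\mathbb{R}$, adjunction gives $\mathrm{Hom}_{\mathbb{Z}}(H_q(M;\mathbb{Z}),R)=\mathrm{Hom}_{\mathbb{R}}(H_q(M;\mathbb{R}),R)$. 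Tracing the identifications shows the resulting isomorphism is exactly $\Psi$ (respectively $\Psi_x$, $\Psi_{\mathrm{g}_x}$). For the case $R=C^{\infty}_{x}(N)$ one may alternatively avoid the explicit identification by using the short exact sequence of complexes given by restriction to the leaf $M\times\{x\}$, together with the parallel evaluation sequence $0\to C^{\infty}_{x}(N)\to C^{\infty}(N)\to\mathbb{R}\to 0$: applying the exact functor $\mathrm{Hom}(H_{\bullet}(M;\mathbb{R}),-)$ and comparing the two long exact sequences via the five lemma deduces $\Psi_x$ from $\Psi$ and ordinary de Rham.

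The step I expect to be the main obstacle is functional-analytic and is concentrated in the germ case $R=C^{\infty}_{\mathrm{g}_x}(N)$. Identifying a germ \emph{along} the noncompact submanifold $M\times\{x\}$ with a form on $M$ valued in the germ space $C^{\infty}_{\mathrm{g}_x}(N)$ is delicate, since a neighbourhood of $M\times\{x\}$ need not contain any product $M\times U$, so there is no uniform neighbourhood of $x$ on which all coefficients are simultaneously defined; one must check that the filtered colimit defining germs interacts correctly with the (possibly infinite-dimensional) homology of $M$ and that smoothness in $N$ survives the passage to germs. For the other two maps the corresponding identification with $R$-valued forms is routine, requiring only the exponential law that identifies a smooth family $\{\eta_y\}$ with a smooth $R$-valued form; once all identifications are in place, the de Rham comparison and the universal coefficient argument are formal.
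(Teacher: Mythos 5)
Your proposal is correct and follows essentially the same route as the paper: both realise the three complexes as fine resolutions of the constant sheaves on $M$ with stalks $C^{\infty}(N)$, $C^{\infty}_{x}(N)$ and $C^{\infty}_{\mathrm{g}_{x}}(N)$, identify the foliated cohomologies with sheaf cohomology, apply the isomorphism $H^{\bullet}(M;\underline{V})\cong\mathrm{Hom}(H_{\bullet}(M);V)$ for a constant sheaf with vector-space stalk, and verify agreement with the explicit integration pairings by naturality. The one obstacle you flag, namely identifying germs along the noncompact leaf $M\times\{x\}$ with $C^{\infty}_{\mathrm{g}_{x}}(N)$-valued forms on $M$, is sidestepped in the paper by taking the quotient presheaf $U\mapsto\Omega^{\bullet}_{\mathrm{g}_{x}}(TU\times N)$ itself as the fine resolution of the constant sheaf with stalk $C^{\infty}_{\mathrm{g}_{x}}(N)$, so that only germs along $U\times\{x\}$ for small $U\subset M$ ever enter and no global product neighbourhood is required.
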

\begin{proof}
Denote the constant sheaves on $M$ associated to the groups $C^{\infty}(N)$, $C^{\infty}_x(N)$ and
$C^{\infty}_{\mathrm{g}_x}(N)$ by $\mathcal{S}_1$, $\mathcal{S}_2$ and $\mathcal{S}_3$, respectively.
By standard arguments, the de Rham differential along $M$ induces resolutions $\mathcal{S}_i\to \mathcal{C}_i^{\bullet}$ by fine sheaves on $M$:
\[\mathcal{C}_1^{\bullet}(U):=\Omega^{\bullet}(TU\times N),\  \ \mathcal{C}_2^{\bullet}(U):=\Omega^{\bullet}_x(TU\times N),\ \ \mathcal{C}_3^{\bullet}(U):=\Omega^{\bullet}_{\mathrm{g}_x}(TU\times N).\]
Hence, the foliated cohomologies from (\ref{pairing}) are isomorphic to the sheaf cohomologies with coefficients in $\mathcal{S}_1$, $\mathcal{S}_2$ and $\mathcal{S}_3$ respectively. On the other hand, for any vector space $V$, denoting by $\underline{V}$ the constant sheaf on $M$, one has a natural isomorphism:
\begin{equation}\label{YAE}
\Phi_V:H^{\bullet}(M;\underline{V})\diffto \mathrm{Hom}\big(H_{\bullet}(M);V\big).
\end{equation}
Hence, we obtain isomorphisms:
\begin{align}\label{isose}
\nonumber \Phi: H^{\bullet}(TM\times N)&\diffto \mathrm{Hom}(H_{\bullet}(M);C^{\infty}(N)),\\
\Phi_x:H^{\bullet}_{x}(TM\times N)&\diffto  \mathrm{Hom}(H_{\bullet}(M);C^{\infty}_{x}(N)),\\
\nonumber \Phi_{\mathrm{g}_x}:H^{\bullet}_{\mathrm{g}_{x}}(TM\times N)&\diffto \mathrm{Hom}(H_{\bullet}(M);C^{\infty}_{\mathrm{g}_{x}}(N)).
\end{align}
We still have to check that these maps coincide with those from (\ref{pairing}). For this we will exploit the
naturality of the maps in (\ref{YAE}).

In the first case, consider the evaluation map $ev_y:C^{\infty}(N)\to \mathbb{R}$, for $y\in N$. This induces a sheaf map $ev_y^M:\mathcal{S}_1\to \underline{\mathbb{R}}$ into the constant sheaf over $M$, which is covered by a map $ev_y^M:\mathcal{C}_1^{\bullet}\to \Omega_M^{\bullet}$ into the standard de Rham resolution of $\underline{\mathbb{R}}$. Hence the map $H^{\bullet}(M;\mathcal{S}_1)\to H^{\bullet}(M;\underline{\mathbb{R}})$ induced by $ev_y$ becomes
\[H^{\bullet}(TM\times N)\stackrel{ev_y^M}{\rmap} H^{\bullet}(M), \ \ [\omega]\mapsto [\omega|_{M\times\{y\}}].\]
By naturality of (\ref{YAE}), it follows that the following square commutes:
\[\begin{array}[c]{ccc}
H^{\bullet}(TM\times N)&\stackrel{\Phi}{\rmap} &\mathrm{Hom}\big(H_{\bullet}(M);C^{\infty}(N)\big)\\
\downarrow\scriptstyle{ev_y}&&\downarrow\scriptstyle{ev_y}\\
H^{\bullet}(M)&\stackrel{\Phi_{\mathbb{R}}}{\rmap} &\mathrm{Hom}\big(H_{\bullet}(M);\mathbb{R}\big).
\end{array}\]
Since $\Phi_{\mathbb{R}}$ is the usual isomorphism given by integration, and by the explicit description of the map
$\Psi$, this implies that $\Psi=\Phi$.

For the second map in (\ref{pairing}) and (\ref{isose}) we proceed similarly, but using the inclusion $i:C^{\infty}_x(N)\to C^{\infty}(N)$ instead of $ev_y$. This gives rise to a sheaf map $\mathcal{S}_2\to \mathcal{S}_1$ which lifts to their resolutions, and then we obtain a commutative square
\[\begin{array}[c]{ccc}
H^{\bullet}_x(TM\times N)&\stackrel{\Phi_x}{\rmap} &\mathrm{Hom}\big(H_{\bullet}(M);C^{\infty}_x(N)\big)\\
\downarrow\scriptstyle{i}&&\downarrow\scriptstyle{i}\\
H^{\bullet}(TM\times N)&\stackrel{\Phi}{\rmap} &\mathrm{Hom}\big(H_{\bullet}(M);C^{\infty}(N)\big).
\end{array}\]
Using also that $\Psi=\Phi$, this implies the equality $\Psi_x=\Phi_x$.

Similarly, for the third map in (\ref{pairing}) and (\ref{isose}), but using the projection map $p:C^{\infty}(N)\to C^{\infty}_{\mathrm{g}_x}(N)$ (instead of the inclusion), we obtain a commutative square
\[\begin{array}[c]{ccc}
H^{\bullet}(TM\times N)&\stackrel{\Phi}{\rmap} &\mathrm{Hom}\big(H_{\bullet}(M);C^{\infty}(N)\big)\\
\downarrow\scriptstyle{p}&&\downarrow\scriptstyle{p}\\
H^{\bullet}_{\mathrm{g}_x}(TM\times N)&\stackrel{\Phi_{\mathrm{g}_x}}{\rmap} &\mathrm{Hom}\big(H_{\bullet}(M);C^{\infty}_{\mathrm{g}_x}(N)\big).
\end{array}\]
Again, since $\Psi=\Phi$, we obtain that $\Psi_{\mathrm{g}_x}=\Phi_{\mathrm{g}_x}$. This concludes the proof.
\end{proof}

We will use the following consequences of Lemma \ref{Lemma3} (the first appeared in \cite{GLSW}).
\begin{corollary}\label{Corollary3}
Let $\eta\in \Omega^{q}(TM\times N)$ be a foliated $q$-form such that $\eta_y\in \Omega^{q}(M)$ is exact for all
$y\in N$. Then there exists $\theta\in \Omega^{q-1}(TM\times N)$ such that $d\theta=\eta$. Moreover, if
$\eta_{x}=0$ for some $x\in N$, then one can choose $\theta$ such that $\theta_{x}=0$.
\end{corollary}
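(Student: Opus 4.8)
The plan is to reduce both assertions directly to the isomorphisms established in Lemma \ref{Lemma3}, applied to the pairing maps $\Psi$ and $\Psi_x$ of (\ref{pairing}).

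First I would record that since each $\eta_y$ is exact, it is in particular closed, so $d\eta = \{d\eta_y\}_{y\in N} = 0$; thus $\eta$ is a foliated cocycle and defines a class $[\eta]\in H^{q}(TM\times N)$. I would then compute the image of this class under $\Psi$. For a homology class $[c]\in H_q(M)$ represented by $c=\sum_i a_i\sigma_i$, the function $\Psi([\eta])([c])=\langle\eta,c\rangle$ sends $y$ to $\sum_i a_i\int_{\Delta_q}(\sigma_i\times\{y\})^*(\eta)=\int_c\eta_y$. Writing $\eta_y=d\alpha_y$ and applying Stokes' theorem, this integral vanishes for every cycle $c$ and every $y\in N$, so $\Psi([\eta])=0$. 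Since $\Psi$ is an isomorphism, $[\eta]=0$ in $H^{q}(TM\times N)$, which is exactly the statement that $\eta=d\theta$ for some $\theta\in\Omega^{q-1}(TM\times N)$.

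For the refined statement, the extra hypothesis $\eta_x=0$ means precisely that $\eta$ lies in the subcomplex $\Omega^{q}_{x}(TM\times N)$ of foliated forms vanishing on $M\times\{x\}$; being closed, it defines a class $[\eta]\in H^{q}_{x}(TM\times N)$. The same computation shows that $\Psi_x([\eta])([c])$ is the germ at $x$ of the function $y\mapsto\int_c\eta_y\in C^{\infty}_{x}(N)$, which again vanishes identically. Invoking the isomorphism $\Psi_x$ from Lemma \ref{Lemma3}, we get $[\eta]=0$ in $H^{q}_{x}(TM\times N)$, i.e.\ $\eta=d\theta$ with $\theta\in\Omega^{q-1}_{x}(TM\times N)$; by definition of this subcomplex, $\theta_x=0$, as required.

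The argument carries no real obstacle beyond Lemma \ref{Lemma3} itself: once $\Psi$ and $\Psi_x$ are known to be injective, the corollary is merely the translation ``each $\eta_y$ is exact $\Leftrightarrow$ the pairing against all cycles vanishes.'' The only point requiring care is to first upgrade pointwise exactness to closedness of the foliated form $\eta$, so that it represents a class at all; everything else is the explicit description of $\Psi$ and $\Psi_x$ recorded before the lemma.
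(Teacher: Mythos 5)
Your proof is correct and follows exactly the paper's own argument: both the paper and you reduce the two assertions to the injectivity of $\Psi$ and $\Psi_x$ from Lemma \ref{Lemma3}, observing that the pairing $\langle[\eta_y],[c]\rangle$ vanishes for all cycles $c$ and all $y$ because each $\eta_y$ is exact. The only difference is that you spell out the Stokes computation and the preliminary observation that pointwise exactness forces $d\eta=0$, which the paper leaves implicit.
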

\begin{proof}
In the first case, we need that $[\eta]=0$ in $H^{\bullet}(TM\times N)$, and in the second, that $[\eta]=0$ in
$H^{\bullet}_x(TM\times N)$. Since $\langle [\eta_y],[c]\rangle=0$, for all $[c]\in H_{q}(M)$ and all $y\in N$,
the description of the maps $\Psi$ and $\Psi_x$ and Lemma \ref{Lemma3} imply the result.
\end{proof}

\begin{corollary}\label{Corollary4}
Let $\eta$ be a closed foliated $q$-form defined on some open $\mathcal{U}\subset M\times N$ around
$M\times\{x\}$. Then there exists a closed foliated $q$-form $\widetilde{\eta}$ on $M\times N$, such that
$\eta|_{\widetilde{\mathcal{U}}}=\widetilde{\eta}|_{\widetilde{\mathcal{U}}}$, for some open
$\widetilde{\mathcal{U}}\subset \mathcal{U}$ containing $M\times\{x\}$.
\end{corollary}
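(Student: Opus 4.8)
The plan is to exploit the germ version of foliated cohomology and its identification in Lemma \ref{Lemma3}, and then to upgrade a cohomological agreement of germs to an honest equality near $M\times\{x\}$ by a cut-off argument. First I would note that, since $\eta$ is closed on the open $\mathcal{U}\supset M\times\{x\}$, its germ along $M\times\{x\}$ defines a class $[\eta]\in H^q_{\mathrm{g}_x}(TM\times N)$. Applying the isomorphism $\Psi_{\mathrm{g}_x}$ of Lemma \ref{Lemma3}, this class corresponds to a linear map $\phi:=\Psi_{\mathrm{g}_x}([\eta])\in\mathrm{Hom}(H_q(M);C^{\infty}_{\mathrm{g}_x}(N))$.

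Next I would lift $\phi$ to a map valued in global functions. The projection $p:C^{\infty}(N)\to C^{\infty}_{\mathrm{g}_x}(N)$ is surjective, since a germ at $x$ is represented by a function on a neighborhood of $x$, which extends to all of $N$ after multiplication by a bump function. As $H_q(M)$ is a vector space, hence free, the induced map $\mathrm{Hom}(H_q(M);C^{\infty}(N))\to \mathrm{Hom}(H_q(M);C^{\infty}_{\mathrm{g}_x}(N))$ is surjective; choose a lift $\widetilde{\phi}$ of $\phi$. Using the isomorphism $\Psi=\Phi$ for the global groups, $\widetilde{\phi}$ is realized by a globally defined closed foliated $q$-form $\widetilde{\eta}_0\in\Omega^q(TM\times N)$. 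The commutative square from the proof of Lemma \ref{Lemma3} associated to $p$ gives $\Psi_{\mathrm{g}_x}(p[\widetilde{\eta}_0])=p\circ\widetilde{\phi}=\phi=\Psi_{\mathrm{g}_x}([\eta])$, and since $\Psi_{\mathrm{g}_x}$ is injective the germs of $\widetilde{\eta}_0$ and $\eta$ are cohomologous. Concretely, there is an open $\mathcal{U}'\subset\mathcal{U}$ containing $M\times\{x\}$ and a foliated $(q-1)$-form $\theta$ on $\mathcal{U}'$ with $\eta-\widetilde{\eta}_0=d\theta$ on $\mathcal{U}'$.

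The last step, which I expect to be the only point beyond the formal bookkeeping, is to promote this cohomological agreement to an equality on a neighborhood. I would fix a function $\chi$ on $M\times N$ with $\chi\equiv 1$ on some open $\widetilde{\mathcal{U}}\subset\mathcal{U}'$ containing $M\times\{x\}$ and with $\mathrm{supp}(\chi)\subset\mathcal{U}'$, so that $\chi\theta$ extends by zero to a global foliated $(q-1)$-form. Setting $\widetilde{\eta}:=\widetilde{\eta}_0+d(\chi\theta)$ yields a globally defined closed foliated $q$-form, and on $\widetilde{\mathcal{U}}$, where $\chi$ is constant and hence its leafwise differential vanishes, one has $d(\chi\theta)=d\theta=\eta-\widetilde{\eta}_0$; therefore $\widetilde{\eta}=\eta$ on $\widetilde{\mathcal{U}}$, which is precisely the desired conclusion.
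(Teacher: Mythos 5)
Your argument is correct and follows essentially the same route as the paper: both proofs reduce the statement to the surjectivity of the restriction map $H^{q}(TM\times N)\to H^{q}_{\mathrm{g}_x}(TM\times N)$, which is extracted from Lemma \ref{Lemma3} via the commutative square relating $\Psi$ and $\Psi_{\mathrm{g}_x}$ and the surjectivity of $C^{\infty}(N)\to C^{\infty}_{\mathrm{g}_x}(N)$. The only cosmetic differences are that you make the lifting of the $\mathrm{Hom}$-element explicit and correct the primitive $\theta$ by a cut-off, whereas the paper takes a global representative of the germ of $\theta$ directly; both are fine.
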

\begin{proof}
First, we claim that the projection $p:\Omega^{\bullet}(TM\times N)\to \Omega^{\bullet}_{\mathrm{g}_x}(TM\times
N)$ induces a surjective map in cohomology. By the description of the maps $\Psi$ and $\Psi_{\mathrm{g}_x}$, we
have a commutative diagram
\[\begin{array}[c]{ccc}
H^{\bullet}(TM\times N)&\stackrel{\Psi}{\rmap} &\mathrm{Hom}\big(H_{\bullet}(M);C^{\infty}(N)\big)\\
\downarrow\scriptstyle{p}&&\downarrow\scriptstyle{p}\\
H^{\bullet}_{\mathrm{g}_x}(TM\times N)&\stackrel{\Psi_{\mathrm{g}_x}}{\rmap} &\mathrm{Hom}\big(H_{\bullet}(M);C^{\infty}_{\mathrm{g}_x}(N)\big).
\end{array}\]
By Lemma \ref{Lemma3}, the horizontal maps are isomorphisms, and since the vertical map on the right is surjective,
so is the vertical map on the left.

Consider a foliated $q$-form $\eta'\in \Omega^q(TM\times N)$, such that
$\eta'|_{\mathcal{U}'}=\eta|_{\mathcal{U}'}$ for some open $\mathcal{U}'\subset \mathcal{U}$ containing
$M\times\{x\}$. Then $\eta'$ defines a class $[\eta']\in H^q_{\mathrm{g}_{x}}(TM\times N)$. By the above, there is a
closed foliated $q$-form $\eta''\in\Omega^q(TM\times N)$, such that $[\eta'']=[\eta']\in
H^{q}_{\mathrm{g}_x}(TM\times N)$. Thus, there is some foliated $q-1$-form $\theta$ and an open
$\widetilde{\mathcal{U}}\subset \mathcal{U}'$ containing $M\times\{x\}$ such that
$\eta'|_{\widetilde{\mathcal{U}}}=(\eta''+d\theta)|_{\widetilde{\mathcal{U}}}$. The closed foliated $q$-form
$\widetilde{\eta}:=\eta''+d\theta$ satisfies the conclusion:
$\widetilde{\eta}|_{\widetilde{\mathcal{U}}}=\eta|_{\widetilde{\mathcal{U}}}$.
\end{proof}

\noindent\textbf{Equivariant submersions}

\medskip

We prove now that submersions can be equivariantly linearized.

\begin{lemma}\label{Lemma4}
Let $G$ be compact Lie group acting linearly on the vector spaces $V$ and $W$. Consider $f:V\to W$ a smooth
$G$-equivariant map, such that $f(0)=0$. If $f$ is a submersion at $0$, then there exists a $G$-equivariant embedding
$\chi:U\hookrightarrow V$, where $U$ is an invariant open around $0$ in $V$, such that $\chi(0)=0$ and
\[f(\chi(v))=df_0 (v), \  \textrm{ for }\ v\in U.\]
\end{lemma}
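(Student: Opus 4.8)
The plan is to reduce the statement to an equivariant version of the inverse function theorem, the point being that for a \emph{compact} group $G$ all the auxiliary choices can be made $G$-invariant. First I note that, differentiating the equivariance relation $f(gv)=g\cdot f(v)$ at $v=0$ and using $f(0)=0$ together with the linearity of the actions, the linear map $L:=df_0:V\rmap W$ is itself $G$-equivariant; since $f$ is a submersion at $0$, $L$ is surjective. Consequently $K:=\ker L$ is a $G$-invariant subspace of $V$. Averaging an arbitrary inner product over the Haar measure of $G$ produces a $G$-invariant inner product on $V$, and I set $C:=K^{\perp}$, so that $V=K\oplus C$ is a $G$-invariant splitting and $L|_C:C\diffto W$ is a $G$-equivariant isomorphism. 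Write $\mathrm{pr}_K:V\to K$ for the (equivariant) orthogonal projection.

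The key construction is the map
\[\psi:V\rmap K\oplus W,\qquad \psi(v):=(\mathrm{pr}_K(v),f(v)).\]
It is $G$-equivariant, satisfies $\psi(0)=0$, and its differential at the origin is $d\psi_0(a,b)=(a,L(b))$ for $(a,b)\in K\oplus C$, which is an isomorphism because $L|_C$ is. Hence $\psi$ is a local diffeomorphism at $0$. The second step is to invert it \emph{equivariantly}: since $G$ is compact and fixes $0$, every neighbourhood $U$ of $0$ contains a $G$-invariant open neighbourhood, namely $\{x : Gx\subset U\}$, whose openness follows from the tube lemma applied to the continuous action $G\times V\to V$. After shrinking I may therefore assume $\psi:U_0\diffto \psi(U_0)$ with $U_0$ invariant; the inverse $\phi:=\psi^{-1}$ of an equivariant diffeomorphism is again $G$-equivariant.

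Finally I define
\[\chi(v):=\phi\big(\mathrm{pr}_K(v),L(v)\big)\]
on the invariant open $U:=(\mathrm{pr}_K,L)^{-1}(\psi(U_0))$. Being the composite of the equivariant map $(\mathrm{pr}_K,L)$ with the equivariant $\phi$, the map $\chi$ is $G$-equivariant, and $\chi(0)=\phi(0)=0$. By construction $\psi(\chi(v))=(\mathrm{pr}_K(v),L(v))$, and reading off the second component of $\psi$ gives exactly $f(\chi(v))=L(v)=df_0(v)$. Moreover, since $(\mathrm{pr}_K,L)=d\psi_0$, we get $d\chi_0=(d\psi_0)^{-1}\circ(\mathrm{pr}_K,L)=\mathrm{id}_V$; therefore $\chi$ is a local diffeomorphism at $0$, in particular an embedding on a possibly smaller invariant neighbourhood, which I rename $U$.

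The only genuinely non-formal point is the equivariant inverse function theorem used in the middle step, i.e. that the local inverse can be taken on a $G$-invariant domain and is automatically equivariant. This is where compactness of $G$ is essential, as it guarantees both the invariant inner product splitting $V=K\oplus C$ and the invariant neighbourhood basis at the fixed point $0$; the remaining manipulations are formal.
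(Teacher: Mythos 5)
Your proof is correct and is essentially the same as the paper's: both construct an equivariant projection $p_K:V\to K=\ker df_0$ (the paper simply asserts its existence from compactness of $G$, you build it from an averaged inner product), invert the equivariant local diffeomorphism $v\mapsto(p_K(v),f(v))$ on an invariant neighbourhood, and define $\chi$ as that inverse composed with the linear isomorphism $(p_K,df_0)$. The extra details you supply (equivariance of $df_0$, invariant shrinking via the tube lemma, $d\chi_0=\mathrm{id}$) are all consistent with, and slightly more explicit than, the paper's argument.
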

\begin{proof}
Since $G$ is compact, we can find a $G$-equivariant projection $p_K:V\to K$, where $K:=\ker(df_0)$. The differential
at $0$ of the $G$-equivariant map
\[(f,p_K):V\rmap W\times K, \ v\mapsto(f(v),p_K(v))\]
is $(df_{0},p_K)$. So $(f,p_K)$ is a diffeomorphism when restricted to some open $U_0$ in $V$ around $0$, which we
may assume to be $G$-invariant. Define the embedding as follows
\[\chi:U\hookrightarrow V, \ \ \ \  \chi:=(f,p_K)^{-1}\circ(df_{0},p_K),\]
where $U:=(df_{0},p_K)^{-1}(U_0)$. Clearly $U$ is $G$-invariant, $\chi$ is $G$-equivariant and $\chi(0)=0$. Since
\[\left(f(\chi(v)),p_K(\chi(v))\right)=\left(df_{0}(v),p_K(v)\right),\]
we also have that $f(\chi(v))=df_{0}(v)$.
\end{proof}

\noindent\textbf{The Moser Lemma for symplectic foliations}

\medskip

The following is a version for symplectic foliations of the Moser Lemma.

\begin{lemma}\label{Lemma5}
Let $(M,\mathcal{F},\omega)$ be a symplectic foliation. Consider a foliated 1-form
\[\alpha\in \Omega^1(T\mathcal{F}),\]
that vanishes on an embedded saturated submanifold $Z$ of $M$. Then $\omega+d_{\mathcal{F}}\alpha$ is
nondegenerate in a neighborhood $U$ of $Z$, and the resulting symplectic foliation
\[(U,\mathcal{F}|_{U},\omega|_{U}+d_{\mathcal{F}}\alpha|_{U})\] is isomorphic around $Z$ to $(M,\mathcal{F},\omega)$ by a
foliated diffeomorphism that fixes $Z$.
\end{lemma}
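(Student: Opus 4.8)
The plan is to run the Moser trick in the leafwise setting. I interpolate between the two leafwise symplectic forms by setting
\[\omega_t := \omega + t\, d_{\mathcal{F}}\alpha, \qquad t\in[0,1],\]
so that each $\omega_t$ is leafwise closed (since $d_{\mathcal{F}}(d_{\mathcal{F}}\alpha)=0$) and $\tfrac{d}{dt}\omega_t = d_{\mathcal{F}}\alpha$. First I would check that $d_{\mathcal{F}}\alpha$ vanishes along $Z$. Because $Z$ is saturated we have $T\mathcal{F}|_Z\subset TZ$, so for $X,Y\in\Gamma(T\mathcal{F})$ and $z\in Z$ the Koszul formula
\[(d_{\mathcal{F}}\alpha)(X,Y) = X(\alpha(Y)) - Y(\alpha(X)) - \alpha([X,Y])\]
involves only derivatives of the functions $\alpha(X),\alpha(Y)$ in directions tangent to $Z$ together with the value $\alpha([X,Y])$ at $z$; all of these vanish because $\alpha\equiv 0$ on $Z$ and $[X,Y]\in\Gamma(T\mathcal{F})$. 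Hence $\omega_t|_Z=\omega|_Z$ is nondegenerate for every $t$, and since nondegeneracy is an open condition, a tube-lemma argument over each point of $Z$ (using compactness of $[0,1]$) produces a single neighborhood $U$ of $Z$ on which every $\omega_t$ is leafwise symplectic.

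Next comes the Moser equation. Using that each $\omega_t$ restricts to a symplectic form on the leaves in $U$, I define the time-dependent foliated vector field $X_t\in\Gamma(T\mathcal{F}|_U)$ as the unique solution of
\[\iota_{X_t}\omega_t = -\alpha.\]
Since $\alpha$ vanishes on $Z$, so does $X_t$. The leafwise Cartan formula $\mathcal{L}_{X_t}\omega_t = d_{\mathcal{F}}(\iota_{X_t}\omega_t) + \iota_{X_t}(d_{\mathcal{F}}\omega_t)$ together with $d_{\mathcal{F}}\omega_t=0$ then gives, for the flow $\phi_t$ of $X_t$,
\[\frac{d}{dt}\big(\phi_t^*\omega_t\big) = \phi_t^*\big(\mathcal{L}_{X_t}\omega_t + d_{\mathcal{F}}\alpha\big) = \phi_t^*\big(d_{\mathcal{F}}(\iota_{X_t}\omega_t) + d_{\mathcal{F}}\alpha\big) = 0.\]
Integrating from $0$ to $1$ yields $\phi_1^*\omega_1 = \omega_0 = \omega$. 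Because $X_t$ is tangent to $\mathcal{F}$, its flow is a foliated diffeomorphism, and because $X_t$ vanishes on $Z$ the flow fixes $Z$ pointwise; thus $\phi_1$ is the desired isomorphism between $(M,\mathcal{F},\omega)$ and $(U,\mathcal{F}|_U,\omega+d_{\mathcal{F}}\alpha)$ near $Z$.

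The main obstacle is existence of the flow $\phi_t$ for all $t\in[0,1]$ on a full neighborhood of the possibly non-compact submanifold $Z$. Here the vanishing $X_t|_Z=0$ is essential: every point of $Z$ is stationary, so the flow is defined for all time on $Z$, and by openness of the flow domain together with compactness of the time interval, the set of points whose trajectory remains inside $U$ up to time $1$ is open and contains $Z$. Shrinking $U$ to this set completes the argument, and since this is a pointwise openness statement no compactness of $Z$ is required.
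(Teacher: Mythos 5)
Your proposal is correct and follows essentially the same route as the paper: the leafwise Moser trick with $\omega_t=\omega+t\,d_{\mathcal{F}}\alpha$, the vector field defined by $\iota_{X_t}\omega_t=-\alpha$, and a tube-lemma/openness argument (using that $X_t$ vanishes on $Z$) to get the flow defined up to time $1$ near the possibly non-compact $Z$. The only difference is that you spell out the verification that $d_{\mathcal{F}}\alpha$ vanishes along $Z$ via the Koszul formula, which the paper leaves implicit.
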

\begin{proof}
Since $\alpha$ vanishes on $Z$ and $Z$ is saturated, it follows that also $d_{\mathcal{F}}\alpha$ vanishes on $Z$.
Thus, there is an open $V$ around $Z$ such that $\omega+d_{\mathcal{F}}\alpha$ is nondegenerate along the leaves
of $\mathcal{F}|_{V}$. Moreover, by the classical tube lemma from topology, we may choose $V$ such that
\[\omega_t:= \omega+td_{\mathcal{F}}\alpha\in \Omega^2(T\mathcal{F})\]
is nondegenerate along the leaves of $\mathcal{F}|_{V}$, for all $t\in [0,1]$. Consider the time dependent vector field
$X_t$ on $V$, tangent to $\mathcal{F}$, determined by
\[\iota_{X_t}\omega_t=-\alpha, \ \ X_t\in \Gamma(T\mathcal{F}|_{V}).\]
Since $X_t$ vanishes along $Z$, again by the tube lemma, there is an open $O\subset V$ around $Z$, such that the
flow $\Phi^t_{X}$ of $X_t$ is defined up to time 1 on $O$. We claim that $\Phi^1_X$ gives the desired isomorphism.
Clearly $\Phi^1_X$ preserves the foliation and is the identity on $Z$. On each leaf $S$, we have that
\begin{align*}
\frac{d}{dt}\Phi_X^{t*}(\omega_{t}|_S)=\Phi_X^{t*}(L_{X_t}\omega_{t}|_S+d_{\mathcal{F}}\alpha|_{S})=\Phi_X^{t*}(d\iota_{X_t}\omega_{t}|_S+d\alpha|_{S})=0.
\end{align*}
Thus $\Phi_X^{t*}(\omega_{t}|_S)$ is constant, and since $\Phi_X^0=\textrm{Id}$, we have that
\[\Phi_X^{1*}\left((\omega+d_{\mathcal{F}}\alpha)|_S\right)=\omega|_{S}.\]
So, $\Phi^1_X$ is an isomorphism between the symplectic foliations
\[\Phi_X^1:(O,\mathcal{F}|_{O},\omega|_{O})\diffto (U,\mathcal{F}|_{U},\omega|_{U}+d_{\mathcal{F}}\alpha|_{U}),\]
where $U:=\Phi^1_X(O)$.
\end{proof}

\section{Proof of Theorem 1}

Since the holonomy of $S$ is finite, it coincides with the linear holonomy. Consider $x\in S$ and denote by
$V:=\nu_{x}$, by $H:=H_{hol}=H_{lin}$, and by $\widetilde{S}:=\widetilde{S}_{hol}=\widetilde{S}_{lin}$.
Applying Lemma \ref{Lemma1}, we obtain that some neighborhood of $S$ in $M$ is diffeomorphic as a foliated
manifold to an open $\mathcal{U}$ around $S$ in the flat bundle
\[(\widetilde{S}\times V)/H.\]
The symplectic leaves correspond to the connected components of $S_v\cap \mathcal{U}$, where
\[S_v:=(\widetilde{S}\times Hv)/H,\ \ v\in V.\]
We claim that there exists $\omega_1$, a closed foliated 2-form on $(\widetilde{S}\times V)/H$ that extends
$\omega|_{\mathcal{U}_1}$, for some open $\mathcal{U}_1\subset \mathcal{U}$ around $S$. For this, consider the
projection
\[p:\widetilde{S}\times V\rmap (\widetilde{S}\times V)/H,\]
and denote by $\widetilde{\mathcal{U}}:=p^{-1}(\mathcal{U})$ and by $\widetilde{\omega}:=p^*(\omega)$, which
is a closed foliated 2-form on the product foliation restricted to $\widetilde{\mathcal{U}}$. By Corollary
\ref{Corollary4}, there is a closed extension $\widetilde{\omega}_0$ of
$\widetilde{\omega}|_{\widetilde{\mathcal{U}}_0}$, where $\widetilde{\mathcal{U}}_0\subset
\widetilde{\mathcal{U}}$ is an open around $\widetilde{S}\times \{0\}$. Define $\widetilde{\omega}_1$ by averaging
over $H$
\[\widetilde{\omega}_1:=\frac{1}{|H|}\sum_{g\in H}g^*(\widetilde{\omega}_0).\]
Since $\widetilde{\omega}$ is invariant, it follows that $\widetilde{\omega}_1$ coincides with $\widetilde{\omega}$
on $\widetilde{\mathcal{U}}_1:=\bigcap_{g\in H} g\widetilde{\mathcal{U}}_0$. Since $\widetilde{\omega}_1$ is
invariant, it is of the form $\widetilde{\omega}_1=p^*(\omega_1)$, where $\omega_1$ is a closed foliated 2-form on
$(\widetilde{S}\times V)/H$, which extends the restriction to $\mathcal{U}_1:=p(\widetilde{\mathcal{U}}_1)$ of
$\omega$.

We will identify foliated $q$-forms $\eta$ on $(\widetilde{S}\times V)/H$, with smooth $H$-equivariant families
$\{\eta_v\in \Omega^q(\widetilde{S})\}_{v\in V}$, where $\eta_v:=p^*(\eta)|_{\widetilde{S}\times\{v\}}$.

We compute now the variation of $\omega$ at $S$. Since $\omega$ and $\omega_1$ coincide around $S$, they have the
same variation at $S$. Using the extension of $\omega$ (or equivalently of $\omega_1$) that vanishes on vectors
tangent to the fibers of the projection to $S$, we see that the variation $\delta_S\omega$ is given by the
$H$-equivariant family:
\[\delta_{S}\omega_v:=\frac{d}{d\epsilon}\omega_{ \epsilon v}|_{\epsilon=0}\in \Omega^2(\widetilde{S}),\]
The local model is represented by the $H$-equivariant family of 2-forms:
\[j^1_{S}(\omega)_v=p^*(\omega_S)+\delta_S\omega_v\in \Omega^2(\widetilde{S}).\]

Consider the $H$-equivariant map
\[f:V\rmap H^2(\widetilde{S}),\ \ f(v)=[\omega_{1,v}]-[p^*(\omega_S)].\]
Smoothness of $f$ follows from Lemma \ref{Lemma3}. Clearly, $f(0)=0$ and its differential at $0$ is the
cohomological variation
\[df_{0}(v)=[\delta_{S}\omega]v, \ \ \forall \ v\in V.\]
By our hypothesis, $f$ is a submersion at $0$. So we can apply Lemma \ref{Lemma4} to find an $H$-equivariant
embedding
\[\chi:U\hookrightarrow V,\]
where $U$ is an $H$-invariant open neighborhoods of $0$ in $V$, such that
\[\chi(0)=0 \ \textrm{ and }\ f(\chi(v))=df_0(v).\]

By $H$-equivariance, $\chi$ induces a foliation preserving embedding
\[\widetilde{\chi}: (\widetilde{S}\times U)/H \hookrightarrow (\widetilde{S}\times V)/H, \ \ \widetilde{\chi}([y,v])=[y,\chi(v)]\]
that restricts to a diffeomorphism between the leaf $S_v$ and the leaf $S_{\chi(v)}$. The pullback of $\omega_1$ under
$\widetilde{\chi}$ is the $H$-equivariant family
\[\omega_2=\{\omega_{2, v}:=\omega_{1, \chi(v)}\}_{v\in U}.\]

We have that
\[[\omega_{2,v}]-[p^*(\omega_S)]=[\omega_{1,\chi(v)}]-[p^*(\omega_S)]=f(\chi(v))=df_0(v)=[\delta_S\omega]v.\]

Equivalently, this relation can be rewritten as
\[\omega_{2,v}=j^1_{S}(\omega)_v+\alpha_v, \forall v\in U,\]
where $\{\alpha_v\}_{v\in U}$ is an $H$-equivariant family of exact 2-forms that vanishes for $v=0$. By Corollary
\ref{Corollary3}, $p^*(\alpha)$ is an exact foliated form on $\widetilde{S}\times U$, and moreover, we can choose a
primitive $\widetilde{\beta}\in\Omega^1(T\widetilde{S}\times U)$ such that $\widetilde{\beta}_0=0$. By averaging,
we may also assume that $\widetilde{\beta}$ is $H$-equivariant, thus it is of the form $\widetilde{\beta}=p^*(\beta)$
for a foliated 1-form on $\beta$ on $(\widetilde{S}\times U)/H$ that vanishes along $S$. We obtain:
\[\omega_2=j^1_S\omega+d \beta.\]
Applying Lemma \ref{Lemma5}, we conclude that, on some open around $S$, $j^1_S\omega$ and $\omega_2$ are
related by a foliated diffeomorphism. Now, $\omega_2$ and $\omega_1$ are related by $\widetilde{\chi}$, and
$\omega_1$ and $\omega$ have the same germ around $S$. This concludes the proof.\\

\noindent\textbf{Proof of Corollary \ref{Corollary1}}

\medskip

Schreier's Lemma says that a subgroup of finite index of a finitely generated group is also finitely generated (see e.g.\
section 5.6 in \cite{Rotman}). Hence, $K_{lin}$ is finitely generated. By Lemma \ref{Lemma2}, $H_{hol}=H_{lin}$,
in particular $H_{hol}$ is finite, and so we are in the setting of Theorem \ref{Theorem}.

\bibliographystyle{amsplain}
\def\lllll{}

\end{document}